\documentclass[11pt]{article}

\usepackage[margin=1truein]{geometry}
\usepackage{authblk}

\newcommand{\myQED}{}

\usepackage{graphicx}
\usepackage{mathptmx}

\usepackage{ascmac}
\usepackage{appendix}
\usepackage[linesnumbered, ruled]{algorithm2e}
\usepackage{amsmath}
\usepackage{amssymb}
\usepackage{amsthm}
\usepackage{bm,bbm}
\usepackage{booktabs}
\usepackage{cancel}
\usepackage{enumitem}
\usepackage{fancybox}
\usepackage{floatrow}
\usepackage[breaklinks]{hyperref}
\usepackage{ifthen}
\usepackage{lipsum}
\usepackage{makecell}
\usepackage{mathrsfs}
\usepackage[showonlyrefs]{mathtools}
\usepackage{multirow}
\usepackage{optidef}
\usepackage{orcidlink}
\usepackage{physics}
\usepackage{pifont}
\usepackage{subcaption}
\usepackage{subfiles}
\usepackage{thm-restate}
\usepackage{tikz}
\usepackage{xparse}

\makeatletter
\let\cl@chapter\undefined
\makeatletter

\usepackage{cleveref}
\usepackage[numbers,sort&compress]{natbib}

\hypersetup{colorlinks=true,linkcolor=blue,citecolor=blue,urlcolor=blue}

\definecolor{cA}{HTML}{0072BD}
\definecolor{cB}{HTML}{EDB120}
\definecolor{cC}{HTML}{77AC30}
\definecolor{cD}{HTML}{D95319}

\newcommand{\defeq}{\coloneqq}

\newcommand{\relmiddle}[1]{\mathrel{}\middle#1\mathrel{}}

\ExplSyntaxOn
\NewDocumentCommand{\definealphabet}{mmmm}{
    \int_step_inline:nnn{`#3}{`#4}{
        \cs_new_protected:cpx{#1 \char_generate:nn{##1}{11}}{
            \exp_not:N #2{\char_generate:nn{##1}{11}}}}}
\ExplSyntaxOff

\newcommand{\refLine}[1]{line~\ref{#1}}
\newcommand{\refLines}[2]{lines~\ref{#1}--\ref{#2}}

\definealphabet{bb}{\mathbb}{A}{Z}
\definealphabet{rm}{\mathrm}{A}{Z}
\definealphabet{cal}{\mathcal}{A}{Z}
\definealphabet{frak}{\mathfrak}{a}{z}

\newtheorem{theorem}{Theorem}
\newtheorem{proposition}{Proposition}
\newtheorem{lemma}{Lemma}

\newtheorem{assumption}{Assumption}

\theoremstyle{definition}

\theoremstyle{remark}

\crefname{theorem}{Theorem}{Theorems}
\crefname{proposition}{Proposition}{Propositions}
\crefname{lemma}{Lemma}{Lemmas}
\crefname{assumption}{Assumption}{Assumptions}
\crefname{definition}{Definition}{Definitions}
\crefname{corollary}{Corollary}{Corollaries}
\crefname{remark}{Remark}{Remarks}
\crefname{example}{Example}{Examples}
\crefname{figure}{Figure}{Figures}
\crefname{table}{Table}{Tables}
\crefname{algorithm}{Algorithm}{Algorithms}

\allowdisplaybreaks[4]

\newcommand{\feps}{\epsilon_\mathrm{f}}
\newcommand{\gtol}{\epsilon_{\mathrm{gtol}}}

\begin{document}

\title{Practical Regularized Quasi-Newton Methods\\
    with Inexact Function Values}

\author[1]{Hiroki Hamaguchi\,\orcidlink{0009-0005-7348-1356}\footnote{E-mail: \href{mailto:hirokihamaguchi0331@gmail.com}{\nolinkurl{hirokihamaguchi0331@gmail.com}}}}
\author[1]{Naoki Marumo\,\orcidlink{0000-0002-7372-4275}}
\author[1,2]{Akiko Takeda\,\orcidlink{0000-0002-8846-4496}}

\affil[1]{Graduate School of Information Science and Technology, The University of Tokyo, Tokyo, Japan}
\affil[2]{Center for Advanced Intelligence Project, RIKEN, Tokyo, Japan}

\maketitle

\begin{abstract}
    Many practical optimization problems involve objective function values that are corrupted by unavoidable numerical errors.
In smooth nonconvex optimization, quasi-Newton methods combined with line search are widely used due to their efficiency and scalability.
These methods implicitly assume accurate function evaluations and thus may fail to converge in noisy settings.
Developing fast and robust quasi-Newton methods for such scenarios is therefore crucial.

To address this issue, we propose a noise-tolerant regularized quasi-Newton method equipped with a relaxed Armijo-type line search, designed to remain stable under inaccurate function evaluations.
By combining a regularization parameter update rule inspired by Objective-Function-Free Optimization and the AdaGrad-Norm method, we establish a global convergence rate of $\order{1/\varepsilon^2}$ for reaching a first-order stationary point under the assumed error model.
We performed extensive experiments on the CUTEst benchmark collection with artificially noisy objective function evaluations, as well as with low-precision floating-point arithmetic (64-, 32-, and 16-bit).
The results demonstrate that the proposed method is substantially more robust than several existing methods, while maintaining competitive practical convergence speed and computational cost.

\end{abstract}

\section{Introduction}\label{sec:introduction}

We consider the unconstrained optimization problem
\begin{equation}
    \underset{x \in \bbR^{n}}{\mathrm{minimize}} \quad f(x),
    \label{prob:main_problem}
\end{equation}
where $f\colon \mathbb{R}^n \to \mathbb{R}$ is a continuously differentiable nonconvex function, and $x \in \mathbb{R}^n$ is the optimization variable.
We focus on settings where objective function evaluations are contaminated by bounded and non-diminishing numerical noise~\citep{sunTrustRegionMethod2023}.
Specifically, only inexact function values $\overline{f}(x)$ are available for optimization.
Such inaccuracies are unavoidable in many realistic scenarios, including finite-precision arithmetic, simulation-based evaluations, and stochastic approximations~\citep{shiNoiseTolerantQuasiNewtonAlgorithm2022,moreEstimatingComputationalNoise2011,caflischMonteCarloQuasiMonte1998}.
When function values are contaminated by numerical noise, standard optimization algorithms may lose their theoretical guarantees, behave erratically, or terminate prematurely with unreliable solutions.

In the absence of noise and with accurate function evaluations, quasi-Newton methods, in particular the Broyden--Fletcher--Goldfarb--Shanno (BFGS) method and its Limited-memory variant (L-BFGS), are among the most efficient and widely used algorithms to solve the problem~\eqref{prob:main_problem}~\citep{liuLimitedMemoryBFGS1989a,nocedal1999numerical}.
Their success is largely attributed to fast local convergence and scalability.
Standard implementations~\citep{2020SciPy-NMeth,Okazaki2007libLBFGS} rely on line search procedures that enforce the Wolfe conditions to guarantee sufficient descent and the positive definiteness of the Hessian approximation.

In the presence of numerical noise, these line search conditions may become unreliable.
Differences in function values or directional derivatives can be dominated by numerical errors, leading to unstable step sizes, ill-conditioned Hessian approximations, or abnormal termination~\citep{shiNoiseTolerantQuasiNewtonAlgorithm2022,paquetteStochasticLineSearch2020}.
From a computational perspective, improving the robustness of quasi-Newton methods in noisy environments is therefore crucial.
In particular, it is desirable to design algorithms that (i) retain computational efficiency when function evaluations are accurate, (ii) remain stable when function values are unreliable, and (iii) guarantee convergence to first-order stationary points under mild assumptions.

In this work, we propose a noise-tolerant regularized quasi-Newton method that addresses these challenges.
Our approach combines Armijo-type line search with quadratic regularization~\citep{uedaRegularizedNewtonMethod2014,kanzowRegularizationLimitedMemory2023}
and Objective-Function-Free Optimization (OFFO) inspired strategy~\citep{Gratton08022024,grattonOFFOMinimizationAlgorithms2023}.
When a sufficient decrease in function value is observed, the method behaves similarly to standard quasi-Newton methods with line search, preserving its practical efficiency.
When numerical noise dominates, we introduce regularization on the approximate Hessian to ensure numerical stability, which yields convergence properties similar to those of OFFO methods, particularly AdaGrad-Norm~\citep{wardAdaGradStepsizesSharp2020}.
The resulting algorithm is particularly well-suited for low-precision or noisy environments.

From a theoretical standpoint, we establish that the proposed method achieves the standard global convergence rate of $\order{1/\varepsilon^2}$ for smooth nonconvex optimization problems, even when objective function values are corrupted by noise.
Extensive numerical experiments on the CUTEst benchmark collection, across various floating-point precisions and artificial noise settings, demonstrate that the proposed method is significantly more stable than standard methods with line search in the presence of significant numerical noise, while maintaining practical convergence speed.

The remainder of this paper is organized as follows.
In \cref{sec:preliminaries}, we review relevant background and introduce the problem setting.
In \cref{sec:algorithm}, we present the proposed algorithm.
In \cref{sec:analysis}, we establish a global convergence property of the proposed algorithm.
In \cref{sec:proposed_choice}, we discuss practical choices of algorithmic variables and parameters.
In \cref{sec:experiments}, we report numerical results on the CUTEst benchmark collection.
Finally, in \cref{sec:conclusion}, we conclude the paper with a summary and future research directions.

Our implementation is publicly available on GitHub%
\footnote{\label{fn:ourGitHub}\url{https://github.com/HirokiHamaguchi/qnlab}}.

\providecommand{\resultsNoiseMainWidth}{0.7}
\providecommand{\resultsNoiseLegendWidth}{0.7}
\providecommand{\resultsAllPrecisionsSubfigWidth}{0.25}

\providecommand{\resultsNoiseMainWidth}{0.9}
\providecommand{\resultsNoiseLegendWidth}{0.8}
\providecommand{\resultsAllPrecisionsSubfigWidth}{0.32}

\section{Preliminaries and Problem Setting}
\label{sec:preliminaries}

We begin by reviewing relevant background and formalizing the problem setting.

\subsection{Regularized Quasi-Newton Method}
\label{sec:prel_rqn}

The regularized quasi-Newton method is a variant of quasi-Newton methods that does not rely heavily on line search techniques. This property is important for optimization in noisy evaluation environments.
There has been renewed interest in regularized Newton-type methods, including cubic-regularization schemes~\citep{nesterovCubicRegularizationNewton2006a,ranganathSymmetricRankOneQuasiNewton2025,wangConvergenceRatesRegularized2025}, and we focus on quadratic regularization~\citep{kanzowRegularizationLimitedMemory2023,doikovSuperUniversalRegularizedNewton2024,liTruncatedRegularizedNewton2009,zhangNewRegularizedQuasiNewton2018,uedaConvergencePropertiesRegularized2010,uedaRegularizedNewtonMethod2014,shengNewAdaptiveTrust2018} for simplicity of analysis and implementation.

For $g_k \defeq \nabla f(x_k)$ and an approximate Hessian $B_k \in \bbR^{n \times n}$, regularized quasi-Newton methods compute the step as
\begin{equation*}
    x_{k+1} = x_k + d_k, \quad d_k = -(B_k + \mu_k I)^{-1} g_k,
\end{equation*}
with a regularization parameter $\mu_k \ge 0$.
Combining regularization with line search is also possible and has been explored~\citep{uedaConvergencePropertiesRegularized2010}.
Several strategies for choosing $\mu_k$ in nonconvex optimization has been proposed~\citep{uedaConvergencePropertiesRegularized2010,uedaRegularizedNewtonMethod2014,shengNewAdaptiveTrust2018}, and one of them is given by
\begin{equation*}
    \mu_k = c_1 \max(0, -\lambda_{\min}(B_k)) + c_2 \norm{\nabla f(x_k)}^\delta, \label{eq:ueda_mu}
\end{equation*}
where $c_1 > 1$, $c_2 > 0$, $\delta \ge 0$, and $\lambda_{\min}(B_k)$ denotes the smallest eigenvalue of $B_k$.
This choice guarantees that $B_k + \mu_k I$ is positive definite, ensuring that $d_k$ is a descent direction.
We adapt such regularization strategies to noisy evaluation environments.

\subsection{Objective-Function-Free Optimization}
\label{sec:prel_offo}

Objective-Function-Free Optimization (OFFO) refers to a class of methods
that avoid explicit evaluations of the objective function and rely solely on gradient information~\citep{Gratton08022024,grattonOFFOMinimizationAlgorithms2023}.
Such methods are particularly attractive when function values are unreliable due to noise.
Similar adaptive trust-region approaches are also discussed~\citep{grapigliaAdaptiveTrustregionMethod2022}.

Prominent examples of OFFO include adaptive gradient methods such as AdaGrad~\citep{duchiAdaptiveSubgradientMethods2011} and AdaGrad-Norm~\citep{wardAdaGradStepsizesSharp2020}.
Their update rules are
\begin{align*}
    \text{(AdaGrad)} \quad x_{k+1}^{(i)}                                      & = x_k^{(i)} -
    \frac{1}{\theta \sqrt{\varsigma + \sum_{j=0}^k (g_j^{(i)})^2}} g_k^{(i)}, &               & (i=1,2,\ldots,n) \\
    \text{(AdaGrad-Norm)} \quad x_{k+1}                                       & = x_k -
    \frac{1}{\theta \sqrt{\varsigma + \sum_{j=0}^k \norm{g_j}^2}} g_k
\end{align*}
respectively, where $\varsigma>0$ and $\theta>0$ are algorithmic parameters, and $x^{(i)}$ denotes the $i$-th component of a vector $x$.

\subsection{Problem Setting}
\label{sec:problem_setting}

We consider optimization problems in which exact objective function values are unavailable, following the recent literature on error-aware optimization~\citep{grattonNoteSolvingNonlinear2020,bellaviaImpactNoiseEvaluation2021,carterNumericalExperienceClass1993,clancyTROPHYTrustRegion2022,sunTrustRegionMethod2023,ruanAdaptiveRegularizedQuasiNewton2024}.
When both objective values and gradients are heavily corrupted by noise, reliable optimization cannot be guaranteed.
Thus, meaningful optimization requires that at least one source of information be sufficiently accurate.
This work focuses on problems arising from finite-precision computations or inexact evaluations, where objective function evaluations are subject to non-negligible errors while gradient information can be computed with relatively high accuracy.
Settings in which objective values are accurate but gradients are noisy, such as derivative-free optimization or stochastic approximation methods~\citep{berahasTheoreticalEmpiricalComparison2021,xie2020analysis,shiNoiseTolerantQuasiNewtonAlgorithm2022}, are beyond the scope of this work.

In the following, we formalize the assumptions on function value and gradient evaluations.
We assume that only an error-contaminated function value $\overline{f}(x)$ is accessible, satisfying the hybrid absolute-relative error model~\citep{highamAccuracyStabilityNumerical2002,10.5555/3168451.3168462}:
\begin{equation}\label{eq:error_model}
    \abs{f(x)-\overline{f}(x)} \le \feps \max(1,\abs{f(x)}),
\end{equation}
where $0 \le \feps < 1$ is an error rate.
The error is predominantly relative when $\abs{f(x)}$ is large and effectively absolute when $\abs{f(x)}$ is small, capturing the typical behavior of rounding errors in IEEE~754 floating-point arithmetic~\citep{highamAccuracyStabilityNumerical2002}.
In the numerical experiments, $\feps$ is taken as a large multiple of the machine epsilon to account for accumulated round-off effects.
Closely related models are also employed in practical stopping criteria~\citep{2020SciPy-NMeth}.

We next specify the assumptions of gradient evaluations.
In practice, termination is typically based on a gradient tolerance $\gtol$, and the condition $\norm{\nabla \overline{f}(x)} \le \gtol$ is used as a stopping criterion.
Hence, prior to termination, it holds that $\norm{\nabla \overline{f}(x)} > \gtol$.
If the gradient noise $\norm{\nabla f(x) - \nabla \overline{f}(x)}$ were comparable to or larger than $\gtol$, the stopping criterion could not be satisfied reliably, even when $\nabla f(x) = 0$. To ensure meaningful termination, the gradient noise must be sufficiently small relative to $\gtol$. Specifically, during the optimization process, we assume
\begin{equation*}
    \norm{\nabla f(x) - \nabla \overline{f}(x)} \ll \gtol < \norm{\nabla \overline{f}(x)}.
\end{equation*}
Under this regime, the computed gradient $\nabla \overline{f}(x)$ is a sufficiently accurate approximation of the true gradient $\nabla f(x)$ prior to termination. Accordingly, for the purpose of simplifying the theoretical analysis, we adopt the exact-gradient assumption
\begin{equation}\label{eq:exact_gradient_assumption}
    \nabla \overline{f}(x) = \nabla f(x).
\end{equation}
We emphasize that this assumption is introduced solely as a modeling simplification, rather than as a claim about practical gradient accuracy.
Notably, the proposed algorithm does not rely on Wolfe-type conditions, which typically require accurate gradient information at trial points, and is therefore robust to gradient inaccuracies.
Our numerical experiments in \cref{sec:experiments} include cases where gradients are contaminated by noise, and they also demonstrate that the proposed method remains effective even when \cref{eq:exact_gradient_assumption} is violated.

\section{Proposed Algorithm}\label{sec:algorithm}

We propose a new algorithm for the problem setting stated in \cref{sec:problem_setting}.
We draw inspiration from the OFFO framework to design robust regularization and scaling strategies.
Unlike purely OFFO methods, our algorithm exploits function values when they are numerically reliable and smoothly switches to gradient-based control otherwise.
This hybrid behavior enables improved stability without sacrificing efficiency in practical optimization tasks.
The pseudo-code of our algorithm is given in \cref{alg:regularized_qn}. The minimum of the empty set is defined as $+\infty$ in \refLine{line:mu_0_condition}.
\Cref{alg:backtracking_linesearch} is an auxiliary line search algorithm called in \cref{alg:regularized_qn}.
In the following, we explain the three main components of \cref{alg:regularized_qn}.

\DontPrintSemicolon
\begin{algorithm}[ht]
    \normalsize
    \KwIn{$x_0$, $0 < k_{\max}$, $0 \le \feps < 1$, $0 < \theta_{\min} \leq \theta_{\max}$ and $0 < \varsigma < 1$}
    $k \gets 0$, $g_0 \gets \nabla \overline{f}(x_0)$, $K^0 \gets \emptyset$, $K^+ \gets \emptyset$ \label{line:init}\;
    \For{$k=0,1,2,\ldots, k_{\max}-1$}{
        \uIf{$\min_{j \in K^0} (\overline{f}(x_j)- \Delta_j) \ge \overline{f}(x_k)$\label{line:mu_0_condition}}{
            $K^0 \gets K^0 \cup \{k\}$ \label{line:update_K_0}\;
            $\mu_k \gets 0$ \label{line:update_mu_0}\;
        }
        \uElse{
            $K^+ \gets K^+ \cup \{k\}$ \label{line:update_K_plus}\;
            $\mu_{k} \gets \theta_k \sqrt{\varsigma + \sum_{j \in K^+} \norm{g_{j}}^2}$ with $\theta_k \in [\theta_{\min}, \theta_{\max}]$ ~ (see \cref{sec:proposed_choice_mu}). \label{line:update_theta}\;
        }
        $d_k \gets -(B_k + \mu_k I)^{-1} g_k$ with $B_k$ ~ (see \cref{sec:proposed_choice_Bk}). \label{line:d_k}\;
        Find $\alpha_k$ and $\Delta_k$ satisfying \cref{eq:relaxed_Armijo_condition} by \cref{alg:backtracking_linesearch}.  \label{line:armijo}\;
        $x_{k+1} \gets x_k + \alpha_k d_k$,\,
        $g_{k+1} \gets \nabla \overline{f}(x_{k+1})$ \label{line:update}\;
    }
    \caption{Noise Tolerant Regularized Quasi-Newton Method}
    \label{alg:regularized_qn}
\end{algorithm}

\DontPrintSemicolon
\begin{algorithm}[ht]
    \normalsize
    \KwIn{$x_k \in \bbR^n$, $d_k \in \bbR^n$, $g_k \in \bbR^n$, $0< c < 1$ and $0<\beta_{\min} < \beta_{\max} < 1$}
    $\alpha_k \gets 1$\; \label{line:ls_init}
    $\Delta_k \gets \frac{2\feps}{1-\feps} \max(1, \overline{f}(x_k), -\overline{f}(x_k+\alpha_k d_k))$\;
    \While{$\overline{f}(x_k) + c \alpha_k g_k^\top d_k + \Delta_k < \overline{f}(x_k+\alpha_k d_k)$}{
        $\alpha_k \leftarrow \beta \alpha_k$ with $\beta \in [\beta_{\min},\beta_{\max}]$ ~ (see \cref{sec:proposed_choice_alpha}). \; \label{line:ls_backtracking}
        $\Delta_k \gets \frac{2\feps}{1-\feps} \max(1, \overline{f}(x_k), -\overline{f}(x_k+\alpha_k d_k))$\;
    }
    \Return{$\alpha_k$}\;
    \caption{Backtracking Line Search with Relaxed Armijo Condition}
    \label{alg:backtracking_linesearch}
\end{algorithm}

\paragraph{Computation of the Regularized Quasi-Newton Direction (\refLine{line:d_k}): }\label{sec:algorithm_components_d}

The first component is the computation of the regularized quasi-Newton direction $d_k$ stated in \cref{sec:prel_rqn}.
Given $g_k=\nabla \overline{f}(x_k) = \nabla f(x_k)$, an approximate Hessian $B_k$, and a regularization parameter $\mu_k$, we compute the search direction $d_k = -(B_k+\mu_k I)^{-1} g_k$.
A practical choice of $B_k$ is discussed in \cref{sec:proposed_choice_Bk}.

\paragraph{Relaxed Armijo Condition with an Error-Absorbing Term (\refLine{line:armijo}): }\label{sec:algorithm_components_alpha}

The second component is the computation of the step size $\alpha_k$ using a relaxed Armijo condition, which is defined as follows:
\begin{equation}\label{eq:relaxed_Armijo_condition}
    \begin{dcases}
        \overline{f}(x_k)+c\alpha_k g_k^\top d_k+\Delta_k\ge\overline{f}(x_k+\alpha_k d_k), \\
        \Delta_k = \frac{2\feps}{1-\feps} \max(1, \overline{f}(x_k), -\overline{f}(x_k + \alpha_k d_k)),
    \end{dcases}
\end{equation}
where $\Delta_k$ is the error-absorbing term that is recomputed for each $\alpha_k$.
This relaxation guarantees the existence of $\alpha_k \in (0,1]$ even in the presence of errors in $\overline{f}$.
When $d_k$ is a descent direction ($g_k^\top d_k < 0$), \cref{eq:relaxed_Armijo_condition} ensures that a temporary increase of $\overline{f}$ is at most $\Delta_k$.
Since $\Delta_k$ depends on $-\overline{f}(x_k + \alpha_k d_k)$ but not on $\overline{f}(x_k + \alpha_k d_k)$, $\Delta_k$ does not increase when $\overline{f}(x_k + \alpha_k d_k)$ becomes large.
This prevents unbounded growth of $\Delta_k$.

\paragraph{Adaptive Update of the Regularization Parameter (\refLines{line:mu_0_condition}{line:update_theta}): }\label{sec:algorithm_components_mu}

The third component is the adaptive update of the regularization parameter $\mu_k$.
A larger $\mu_k$ yields a more conservative step, while a smaller $\mu_k$ allows a more aggressive quasi-Newton step, especially when $\mu_k=0$.
In practical settings, $\mu_k=0$ is preferred, since this allows us to fully exploit the quasi-Newton approximation and typically leads to practically faster convergence.
Let us partition the iteration indices $K\defeq \{0,1,2,\ldots, k_{\max}-1\}$ into two sets:
\begin{equation}\label{eq:def_K0_Kplus}
    K^0 \defeq \{k \in K \mid \mu_k = 0\}, \quad K^+ \defeq \{k \in K \mid \mu_k > 0\}.
\end{equation}
We set $\mu_k=0$ if and only if the following condition is satisfied:
\begin{equation}\label{eq:mu_0_condition}
    \min_{j \in K^0, \, j < k} \qty(\overline{f}(x_j)- \Delta_j) \ge \overline{f}(x_k).
\end{equation}
Note that $k=0$ satisfies \cref{eq:mu_0_condition} since $K^0$ is empty.
This condition restricts $\mu_k=0$ to iterations for which a sufficient decrease in $\overline f$ has been observed.
Thereby, while $\overline{f}$ can temporarily increase, this condition theoretically ensures that $\overline{f}$ does not grow unboundedly or oscillate indefinitely.
Otherwise, to ensure convergence to first-order stationary points, we apply an OFFO-based update
\begin{equation}\label{eq:offo_based_update}
    \mu_k = \theta_k \sqrt{\varsigma+\sum_{j \in K^+, \, j \leq k}\norm{g_j}^2},
\end{equation}
as described in \cref{sec:prel_offo}.
Since OFFO schemes do not rely on function values, this update is robust to numerical errors in $\overline{f}$.

\section{Theoretical Analysis}\label{sec:analysis}

In this section, we establish the global convergence properties of
\cref{alg:regularized_qn} under the problem setting in
\cref{sec:problem_setting}, namely,
\cref{eq:error_model,eq:exact_gradient_assumption}.

\subsection{Assumptions}\label{sec:assumptions}

Throughout this section, we make the following three assumptions.
The first assumption ensures that the objective function is bounded below.
\begin{assumption}\label{ass:bounded}
    The function $f$ is bounded below, meaning that for all $x \in \mathbb{R}^n$,
    \begin{equation*}
        f(x) \ge f^* \defeq \inf_{x \in \bbR^n} f(x) > -\infty .
    \end{equation*}
\end{assumption}
By the triangle inequality and \cref{eq:error_model}, we have
$\overline{f}(x) \ge f(x) - \feps \max(1, \abs{f(x)})$.
Since the function $g(t) = t - \feps \max(1, \abs{t})$ is non-decreasing for $0 \le \feps < 1$, it follows that $\inf_x g(f(x)) = g(f^*)$.
Consequently, we can also lower bound $\overline{f}$ as
\begin{equation}\label{eq:ass_bounded_overline_f}
    \overline{f}(x) \ge
    \overline{f}^*
    \defeq \inf_{x \in \bbR^n} \qty( f(x) - \feps \max(1, \abs{f(x)}) )
    = f^* - \feps \max(1, \abs{f^*})
    > -\infty .
\end{equation}

The second assumption ensures smoothness of the objective function.
\begin{assumption}\label{ass:LSmooth}
    The function $f \colon \bbR^n \to \bbR$ is $L$-smooth, meaning that there exists
    a constant $L > 0$ such that for all $x, y \in \bbR^n$,
    \begin{equation*}
        f(y) \le f(x) + \nabla f(x)^\top (y - x) + \frac{L}{2} \norm{y - x}^2.
    \end{equation*}
\end{assumption}

The third assumption requires the matrices $\{B_k\}_{k=0}^{k_{\mathrm{max}}-1}$ to be uniformly positive definite and bounded.
\begin{assumption}\label{ass:BBounds}
    There exist constants $0 < m \leq M$ such that all $B_k$ satisfy
    \begin{equation*}\label{eq:BBounds}
        mI \preceq B_k \preceq MI.
    \end{equation*}
\end{assumption}
As we will discuss in \cref{sec:proposed_choice}, the standard BFGS update with a slight modification guarantees \cref{ass:BBounds} in practice.
By \cref{ass:BBounds}, we can derive useful bounds involving $g_k$ and $d_k$.
Using $d_k=-(B_k+\mu_k I)^{-1} g_k$ (\refLine{line:d_k}) and \cref{ass:BBounds} asserting $(m + \mu_k)I \preceq B_k + \mu_k I \preceq (M + \mu_k)I$, we obtain the following bounds:
\begin{align}
    -g_k^\top d_k & = d_k^\top (B_k+\mu_k I) d_k      \ge (m+\mu_k)\norm{d_k}^2 \ge m \norm{d_k}^2, \label{eq:BBounds_results_zero} \\
    -g_k^\top d_k & = g_k^\top (B_k+\mu_k I)^{-1} g_k \ge \frac{1}{M+\mu_k}\norm{g_k}^2, \label{eq:BBounds_results_first}           \\
    \norm{d_k}^2  & = \norm{(B_k+\mu_k I)^{-1}g_k}^2 \le \frac{1}{(m+\mu_k)^2}\norm{g_k}^2. \label{eq:BBounds_results_second}
\end{align}

\subsection{Error Bound on Function Evaluations}\label{sec:error_bound_on_function_evaluations}

We first establish useful properties of the error-absorbing term $\Delta_k$ defined in \cref{eq:relaxed_Armijo_condition}.
In the following, we define the actual and observed function decreases as
\begin{align}
    \delta_k            & \defeq f(x_k) - f(x_{k+1}),           \label{eq:def_small_delta_k}                      \\
    \overline{\delta}_k & \defeq \overline{f}(x_k) - \overline{f}(x_{k+1}). \label{eq:def_overline_small_delta_k}
\end{align}
The first property states that the error between the actual values difference $\delta_k$ and the observed values difference $\overline{\delta}_k$ can be bounded by $\Delta_k$.
\begin{lemma}\label{lem:error_bound_on_function_evaluations}
    If $x_k$ and $x_{k+1}$ satisfy $f(x_{k+1}) \le f(x_k)$, it holds that
    \begin{equation*}
        \delta_k \leq \overline{\delta}_k + \Delta_k.
    \end{equation*}
\end{lemma}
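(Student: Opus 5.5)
The plan is to reduce the claim to a bound on two individual evaluation errors, and then to compare the resulting error scale with the quantity that appears inside $\Delta_k$. By the definitions \cref{eq:def_small_delta_k,eq:def_overline_small_delta_k}, the inequality $\delta_k \le \overline{\delta}_k + \Delta_k$ is equivalent to
\begin{equation*}
    \bigl(f(x_k)-\overline{f}(x_k)\bigr) + \bigl(\overline{f}(x_{k+1}) - f(x_{k+1})\bigr) \le \Delta_k .
\end{equation*}
Applying the error model \cref{eq:error_model} to each bracket bounds the left-hand side by $\feps\bigl(\max(1,\abs{f(x_k)}) + \max(1,\abs{f(x_{k+1})})\bigr)$. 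Here $\Delta_k$ is the value from \cref{eq:relaxed_Armijo_condition} at the accepted step size, so that $x_k+\alpha_k d_k = x_{k+1}$.

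Next I would introduce the quantity $A \defeq \max(1, f(x_k), -f(x_{k+1}))$. The goal is to show that both $\max(1,\abs{f(x_k)})$ and $\max(1,\abs{f(x_{k+1})})$ are at most $A$, and this is the only place the hypothesis $f(x_{k+1}) \le f(x_k)$ is used. For $\abs{f(x_k)}$ there are two cases. Either $\abs{f(x_k)} = f(x_k)$, or $\abs{f(x_k)} = -f(x_k) \le -f(x_{k+1})$. In both cases $\abs{f(x_k)} \le A$. Symmetrically, $\abs{f(x_{k+1})}$ equals either $-f(x_{k+1})$ or $f(x_{k+1}) \le f(x_k)$, so it is also at most $A$. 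Together these give that the left-hand side is at most $2\feps A$.

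It then remains to show $(1-\feps)A \le \max(1, \overline{f}(x_k), -\overline{f}(x_{k+1}))$. This step translates the true-value scale $A$ into the observed-value scale used by the algorithm, and I regard it as the main point needing care. I would split according to which term attains the maximum in $A$.
\begin{itemize}
\item If $A=1$, the claim holds because $1-\feps \le 1$.
\item If $A = f(x_k) > 1$, then \cref{eq:error_model} gives $\overline{f}(x_k) \ge f(x_k) - \feps f(x_k) = (1-\feps)A$.
\item If $A = -f(x_{k+1}) > 1$, then $\abs{f(x_{k+1})} = A$, and $-\overline{f}(x_{k+1}) \ge -f(x_{k+1}) - \feps\abs{f(x_{k+1})} = (1-\feps)A$.
\end{itemize}
In every case the claim follows. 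Since $\feps < 1$, I can divide by $1-\feps$ and conclude
\begin{equation*}
    2\feps A \le \frac{2\feps}{1-\feps}\max(1,\overline{f}(x_k),-\overline{f}(x_{k+1})) = \Delta_k ,
\end{equation*}
which completes the argument.
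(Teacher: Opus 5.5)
Your proof is correct and follows the paper's argument essentially step for step. You bound the two evaluation errors with the error model, use $f(x_{k+1}) \le f(x_k)$ to collapse $\max(1,\abs{f(x_k)},\abs{f(x_{k+1})})$ to $\max(1,f(x_k),-f(x_{k+1}))$, and then convert true values to observed values at the cost of a factor $\frac{1}{1-\feps}$; the paper isolates that last step as the pointwise inequality $\max(1,\pm f(x)) \le \frac{1}{1-\feps}\max(1,\pm\overline{f}(x))$, which it reuses in the next lemma. Your restriction to the accepted step size is unnecessary: the paper also applies this lemma to trial points in the backtracking analysis, and your argument uses only the formula for $\Delta_k$, so it covers that case unchanged.
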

\begin{proof}
    We start from general observations.
    We first prove that, for all $x \in \bbR^n$,
    \begin{equation}\label{eq:fx_two_cases}
        \begin{cases}
            \max(1,+f(x))  \le \frac{1}{1-\feps} \max(1,+\overline f(x)), \\
            \max(1,-f(x)) \le \frac{1}{1-\feps} \max(1,-\overline f(x)).
        \end{cases}
    \end{equation}
    We prove the inequality with the plus sign.
    If $+f(x)\le 1$, then $\max(1,+f(x)) = 1 \leq \frac{1}{1-\feps} \leq \frac{1}{1-\feps} \max(1,+\overline f(x))$.
    If $+f(x) > 1$, \cref{eq:error_model} implies
    \begin{equation}\label{eq:error_model_fx_1}
        \abs{f(x)-\overline{f}(x)} \le \feps \max(1,\abs{f(x)}) = +\feps f(x)
    \end{equation}
    and thus $f(x) - \overline f(x) \leq +\feps f(x)$. Hence,
    \begin{equation*}
        \max(1,+f(x)) = +f(x) \le +\frac{1}{1-\feps} \overline f(x) \le \frac{1}{1-\feps} \max(1,+\overline f(x)).
    \end{equation*}
    The case with the negative sign can be shown analogously, using $-f(x) + \overline f(x) \leq -\feps f(x)$, which follows from \cref{eq:error_model_fx_1}. This completes the proof of \cref{eq:fx_two_cases}.

    We also have the following general bound for all $k \in K$:
    \begin{align}
        \abs{\delta_k-\overline{\delta}_k}
         & = \abs{(f(x_k)-\overline{f}(x_k)) - (f(x_{k+1})-\overline{f}(x_{k+1}))}      &  & (\text{rearrangement})                                \notag \\
         & \leq \abs{f(x_k)-\overline{f}(x_k)} + \abs{f(x_{k+1})-\overline{f}(x_{k+1})} &  & (\text{triangle inequality}) \notag                          \\
         & \le \feps\max(1,\abs{f(x_k)}) + \feps\max(1,\abs{f(x_{k+1})})                &  & (\text{\cref{eq:error_model}}) \notag                        \\
         & \le 2\feps \max(1,\abs{f(x_k)},\abs{f(x_{k+1})}).                            &  & (\text{max property}) \label{eq:delta_k_error_bound}
    \end{align}

    With these preparations, we now prove the lemma.
    From the condition $f(x_{k+1}) \le f(x_k)$, \cref{eq:delta_k_error_bound} can be further bounded as
    \begin{align*}
        2 \feps \max(1,\abs{f(x_k)},\abs{f(x_{k+1})})
         & = 2 \feps \max(1,f(x_k),-f(x_{k+1}))                                        &  & (\text{$f(x_{k+1}) \le f(x_k)$}) \\
         & \le \frac{2 \feps}{1-\feps}\max(1,\overline{f}(x_k),-\overline{f}(x_{k+1})) &  & (\text{\cref{eq:fx_two_cases}})  \\
         & = \Delta_k,                                                                 &  &
        (\text{\cref{eq:relaxed_Armijo_condition}})
    \end{align*}
    which yields the desired inequality with $\delta_k - \overline{\delta}_k \leq \abs{\delta_k - \overline{\delta}_k} \leq \Delta_k$.
    \myQED
\end{proof}

We also derive the following lemma for later use.
\begin{lemma}\label{lem:error_delta_bounded_by_Delta}
    Under \cref{ass:BBounds}, it holds that
    \begin{equation*}
        \overline{\delta}_k \leq \delta_k + \frac{1 + \feps}{1 - \feps} \Delta_k.
    \end{equation*}
\end{lemma}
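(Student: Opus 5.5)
The plan is to reduce everything to the two-sided estimate \cref{eq:delta_k_error_bound} from the proof of \cref{lem:error_bound_on_function_evaluations},
\[
\overline{\delta}_k-\delta_k \le 2\feps\max(1,\abs{f(x_k)},\abs{f(x_{k+1})}),
\]
and then to bound the right-hand side by $\frac{1+\feps}{1-\feps}\Delta_k$. Write $D_k\defeq\max(1,\overline f(x_k),-\overline f(x_{k+1}))$, so that $\Delta_k=\frac{2\feps}{1-\feps}D_k$. The goal then becomes
\[
\max(1,\abs{f(x_k)},\abs{f(x_{k+1})})\le \frac{1+\feps}{(1-\feps)^2}D_k .
\]

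The role of \cref{ass:BBounds} is only to guarantee descent. By \cref{eq:BBounds_results_zero}, $g_k^\top d_k\le 0$. Since $\alpha_k$ is accepted by \cref{alg:backtracking_linesearch}, the relaxed Armijo condition \cref{eq:relaxed_Armijo_condition} then gives
\[
\overline f(x_{k+1})\le \overline f(x_k)+\Delta_k .
\]
This single inequality is the key extra ingredient compared with \cref{lem:error_bound_on_function_evaluations}. Its consequence is
\[
\max\bigl(\overline f(x_{k+1}),\,-\overline f(x_k)\bigr)\le D_k+\Delta_k=\frac{1+\feps}{1-\feps}D_k .
\]

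I would split into two cases. If $f(x_{k+1})\le f(x_k)$, the proof of \cref{lem:error_bound_on_function_evaluations} already shows $\abs{\delta_k-\overline\delta_k}\le\Delta_k$, and $\Delta_k\le\frac{1+\feps}{1-\feps}\Delta_k$ finishes it.

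If $f(x_{k+1})>f(x_k)$, then $\max(1,\abs{f(x_k)},\abs{f(x_{k+1})})=\max(1,f(x_{k+1}),-f(x_k))$. I bound each of the last two terms using \cref{eq:fx_two_cases}:
\begin{align*}
f(x_{k+1}) &\le \frac{1}{1-\feps}\max(1,\overline f(x_{k+1})) \le \frac{1+\feps}{(1-\feps)^2}D_k,\\
-f(x_k) &\le \frac{1}{1-\feps}\max(1,-\overline f(x_k)) \le \frac{1+\feps}{(1-\feps)^2}D_k .
\end{align*}
In each line the second inequality uses the consequence of the Armijo step displayed above. The constant $1$ is trivially bounded by the same quantity. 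Multiplying by $2\feps$ yields $\overline\delta_k-\delta_k\le\frac{2\feps(1+\feps)}{(1-\feps)^2}D_k=\frac{1+\feps}{1-\feps}\Delta_k$.

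The main obstacle is the second case. There the quantities that control the error, $f(x_{k+1})$ and $-f(x_k)$, are not among the terms in $\Delta_k$. The fix is to use the accepted Armijo inequality to transfer them to $\overline f(x_k)$ and $-\overline f(x_{k+1})$. This transfer costs the extra factor $1+\frac{2\feps}{1-\feps}=\frac{1+\feps}{1-\feps}$, which is exactly the constant in the statement.
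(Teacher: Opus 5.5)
Your proof is correct and follows essentially the same route as the paper. Both arguments bound $\abs{\delta_k-\overline{\delta}_k}$ via \cref{eq:delta_k_error_bound} and \cref{eq:fx_two_cases}, then use the Armijo consequence $\overline{f}(x_{k+1})\le\overline{f}(x_k)+\Delta_k$ to absorb $\overline{f}(x_{k+1})$ and $-\overline{f}(x_k)$ at the cost of one extra $\Delta_k$, which gives exactly the factor $\frac{1+\feps}{1-\feps}$. The only difference is your case split on the sign of $f(x_{k+1})-f(x_k)$; it is harmless but unnecessary, since the paper bounds all four quantities $\pm\overline{f}(x_k)$, $\pm\overline{f}(x_{k+1})$ at once.
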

\begin{proof}
    Since $-g_k^\top d_k \geq 0$ from \cref{ass:BBounds} and \cref{eq:BBounds_results_zero}, the relaxed Armijo condition (\cref{eq:relaxed_Armijo_condition}) implies
    \begin{equation}\label{eq:overline_delta_k_lower_bound}
        \overline{f}(x_k) - \overline{f}(x_{k+1})
        \geq - c \alpha_k g_k^\top d_k -\Delta_k
        \geq -\Delta_k.
    \end{equation}
    Thus, using the general results in \cref{eq:delta_k_error_bound} and $\Delta_k \geq 0$, we can further bound as
    \begin{align*}
        \abs{\delta_k - \overline{\delta}_k} & \leq 2\feps \max(1,\abs{f(x_k)},\abs{f(x_{k+1})})                                                                                 &  & (\text{\cref{eq:delta_k_error_bound}})          \\
                                             & \leq \frac{2\feps}{1-\feps} \max(1, \overline{f}(x_{k}), -\overline{f}(x_{k}), \overline{f}(x_{{k}+1}), -\overline{f}(x_{{k}+1})) &  & (\text{\cref{eq:fx_two_cases}})                 \\
                                             & \le \frac{2\feps}{1-\feps} \max(1, \overline{f}(x_{k}) + \Delta_{k}, -\overline{f}(x_{{k}+1}) + \Delta_{k})                       &  & (\text{\cref{eq:overline_delta_k_lower_bound}}) \\
                                             & \le \frac{2\feps}{1-\feps} \qty(\max(1, \overline{f}(x_{k}), -\overline{f}(x_{{k}+1})) + \Delta_k)                                                                                     \\
                                             & = \qty(1 + \frac{2\feps}{1-\feps}) \Delta_k  = \frac{1 + \feps}{1 - \feps} \Delta_k,                                              &  & (\text{\cref{eq:relaxed_Armijo_condition}})
    \end{align*}
    which yields the desired inequality with $\overline{\delta}_k - \delta_k \leq \abs{\delta_k - \overline{\delta}_k} \leq \frac{1 + \feps}{1 - \feps} \Delta_k$.
    \myQED
\end{proof}

\subsection{Backtracking Stage}\label{sec:inner_loop}

We next analyze the line search in \cref{alg:backtracking_linesearch}.
Recall that the parameter $c$ in \cref{alg:backtracking_linesearch} satisfies $0 < c < 1$ and $0 < m$ from \cref{ass:BBounds}.

\begin{lemma}\label{lem:backtracking_finite}
    Under \cref{ass:LSmooth,ass:BBounds},
    the backtracking procedure in \cref{alg:backtracking_linesearch} terminates whenever the step size $\alpha_k$ satisfies
    \begin{equation}\label{eq:alpha_k_upper_bound}
        \alpha_k \le \frac{2(1-c)m}{L}.
    \end{equation}
\end{lemma}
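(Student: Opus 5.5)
We must show that the while-condition of \cref{alg:backtracking_linesearch} fails at any $\alpha_k$ satisfying \cref{eq:alpha_k_upper_bound}. In other words, such an $\alpha_k$ meets the relaxed Armijo condition \cref{eq:relaxed_Armijo_condition} with the corresponding $\Delta_k$. Since $\alpha_k$ is shrunk geometrically by factors $\beta \le \beta_{\max} < 1$, it eventually drops below this threshold, so the loop terminates. The plan is to get the descent inequality from the true function $f$ and then pass to $\overline{f}$ at both points using the error model.

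\textbf{Step 1 (exact decrease).} Write $x^+ = x_k + \alpha_k d_k$. By \cref{ass:LSmooth} and $g_k = \nabla f(x_k)$ from \cref{eq:exact_gradient_assumption},
\[
f(x^+) \le f(x_k) + \alpha_k g_k^\top d_k + \tfrac{L}{2}\alpha_k^2\norm{d_k}^2 .
\]
By \cref{eq:BBounds_results_zero}, $\norm{d_k}^2 \le -g_k^\top d_k/m$. Hence, whenever $\alpha_k \le 2(1-c)m/L$, we have $\tfrac{L}{2}\alpha_k^2\norm{d_k}^2 \le -(1-c)\alpha_k g_k^\top d_k$. This gives
\[
f(x^+) \le f(x_k) + c\alpha_k g_k^\top d_k \le f(x_k).
\]

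\textbf{Step 2 (transfer to $\overline{f}$).} Because $f(x^+)\le f(x_k)$, we can reuse the bound from the proof of \cref{lem:error_bound_on_function_evaluations}. By \cref{eq:delta_k_error_bound} together with \cref{eq:fx_two_cases},
\[
\abs{(f(x_k)-\overline f(x_k)) - (f(x^+)-\overline f(x^+))} \le \frac{2\feps}{1-\feps}\max(1,\overline f(x_k),-\overline f(x^+)) = \Delta_k .
\]
This is exactly the $\Delta_k$ computed in the algorithm at the current trial $\alpha_k$. Therefore
\[
\overline f(x^+) - \overline f(x_k) \le f(x^+)-f(x_k) + \Delta_k \le c\alpha_k g_k^\top d_k + \Delta_k,
\]
which is the relaxed Armijo condition, so the while loop exits.

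\textbf{Main obstacle.} The only delicate point is in Step 2. The bound $\max(1,\abs{f(x_k)},\abs{f(x^+)}) = \max(1,f(x_k),-f(x^+))$ needs the monotonicity $f(x^+)\le f(x_k)$ from Step 1. It also needs $\Delta_k$ to be evaluated at the same trial point $x^+$. Both hold, because Algorithm~\ref{alg:backtracking_linesearch} recomputes $\Delta_k$ after each backtracking update.
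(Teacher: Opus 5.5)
Your proof is correct and follows essentially the same route as the paper. You use $L$-smoothness together with $-g_k^\top d_k \ge m\norm{d_k}^2$ to get the exact decrease $f(x_k)-f(x_k+\alpha_k d_k)\ge -c\alpha_k g_k^\top d_k\ge 0$. You then transfer it to $\overline f$ via the error bound of \cref{lem:error_bound_on_function_evaluations}, which you unpack through \cref{eq:delta_k_error_bound,eq:fx_two_cases} instead of citing the lemma directly. This yields the relaxed Armijo condition with the same $\Delta_k$ computed at the current trial step.
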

\begin{proof}
    Under the $L$-smoothness of $f$ (\cref{ass:LSmooth}), we always have
    \begin{align}
        f(x_k)-f(x_k+\alpha_k d_k)
         & \ge -\alpha_k g_k^\top d_k - \frac{L}{2}\alpha_k^2\norm{d_k}^2                                   \notag                           \\
         & = -c \alpha_k g_k^\top d_k + \alpha_k \qty(-(1-c)g_k^\top d_k - \frac{L\alpha_k}{2}\norm{d_k}^2). \label{eq:backtracking_Lsmooth}
    \end{align}
    From \cref{ass:BBounds} and \cref{eq:BBounds_results_zero}, we have $-g_k^\top d_k \geq m \norm{d_k}^2$.
    Under \cref{eq:alpha_k_upper_bound}, we have
    \begin{equation}\label{eq:backtracking_simple_bound}
        -(1-c)g_k^\top d_k - \frac{L\alpha_k}{2}\norm{d_k}^2
        \ge \qty((1-c)m - \frac{L\alpha_k}{2})\norm{d_k}^2 \ge 0.
    \end{equation}
    Combining \cref{eq:backtracking_Lsmooth,eq:backtracking_simple_bound} yields
    \begin{equation*}
        f(x_k) - f(x_k+\alpha_k d_k) \geq -c \alpha_k g_k^\top d_k.
    \end{equation*}
    Since $-g_k^\top d_k \geq 0$ from \cref{ass:BBounds} and \cref{eq:BBounds_results_zero}, the condition of \cref{lem:error_bound_on_function_evaluations} is satisfied with $x_{k+1} = x_k + \alpha_k d_k$. Thus we obtain $\delta_k \le \overline{\delta}_k + \Delta_k$, i.e.,
    \begin{equation*}
        -c\alpha_k g_k^\top d_k
        \le
        f(x_k) - f(x_k+\alpha_k d_k)
        \le \overline{f}(x_k) - \overline{f}(x_k+\alpha_k d_k) + \Delta_k.
    \end{equation*}
    This inequality means that the relaxed Armijo condition (\cref{eq:relaxed_Armijo_condition}) is satisfied and thus the backtracking procedure terminates.
    This completes the proof.
    \myQED
\end{proof}

Now, let us analyze \cref{alg:backtracking_linesearch} using \cref{lem:backtracking_finite}.
Define
\begin{equation*}
    \overline{\alpha}\defeq \min\qty(\beta_{\min} \frac{2(1-c)m}{L},1)>0.
\end{equation*}
The following states the key property of the backtracking procedure.

\begin{proposition}\label{prop:backtracking_summary}
    Under \cref{ass:LSmooth,ass:BBounds}, the backtracking line search in \cref{alg:backtracking_linesearch} terminates after finitely many rejections, and the step size $\alpha_k$ satisfies
    \begin{equation*}
        \alpha_k\ge\overline{\alpha}.
    \end{equation*}
\end{proposition}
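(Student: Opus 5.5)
The plan is a standard backtracking argument driven by \cref{lem:backtracking_finite}. Write $\alpha^\sharp \defeq \frac{2(1-c)m}{L}$ for the threshold in \cref{eq:alpha_k_upper_bound}. By \cref{lem:backtracking_finite}, once a trial step size satisfies $\alpha_k \le \alpha^\sharp$, the while-condition in \cref{alg:backtracking_linesearch} fails, so no further backtracking occurs. The proof then has two parts: finite termination, and a case analysis on whether the initial trial step is accepted.

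For finite termination, note that each rejection multiplies $\alpha_k$ by some $\beta \le \beta_{\max} < 1$. After $j$ rejections the trial step is therefore at most $\beta_{\max}^j$. Since $\beta_{\max}^j \to 0$, there is a finite $j$ with $\beta_{\max}^j \le \alpha^\sharp$. At that point \cref{lem:backtracking_finite} guarantees acceptance. Hence the number of rejections is bounded by $\lceil \log(\alpha^\sharp)/\log(\beta_{\max}) \rceil$ when $\alpha^\sharp<1$, and is zero otherwise.

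For the lower bound, distinguish two cases.
\begin{itemize}
\item If the initial trial $\alpha_k = 1$ is accepted, then $\alpha_k = 1 \ge \overline{\alpha}$.
\item Otherwise, consider the last rejected trial value $\alpha'$, so that the returned step is $\alpha_k = \beta\alpha'$ with $\beta \ge \beta_{\min}$. Since $\alpha'$ was rejected, the contrapositive of \cref{lem:backtracking_finite} gives $\alpha' > \alpha^\sharp$. Therefore $\alpha_k \ge \beta_{\min}\alpha' > \beta_{\min}\alpha^\sharp \ge \overline{\alpha}$.
\end{itemize}
Combining the two cases yields $\alpha_k \ge \min(\beta_{\min}\alpha^\sharp, 1) = \overline{\alpha}$.

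The step needing the most care is the contrapositive use of \cref{lem:backtracking_finite}. One must check that the lemma applies to every trial value $\alpha_k$, not only to the final one. This holds because $\Delta_k$ is recomputed for each trial via the same formula as in \cref{eq:relaxed_Armijo_condition}, so the lemma's proof goes through verbatim for any trial step. That proof invokes \cref{lem:error_bound_on_function_evaluations} with $x_{k+1}$ replaced by the trial point $x_k+\alpha_k d_k$, which is legitimate since that lemma only uses the two points and the corresponding $\Delta_k$. Once this is confirmed, the rest is routine.
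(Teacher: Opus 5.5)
Your proposal is correct and follows essentially the same argument as the paper. It obtains finiteness from $\alpha_k \le \beta_{\max}^r$ together with \cref{lem:backtracking_finite}, then splits on whether any rejection occurred, using $\alpha' > 2(1-c)m/L$ for the last rejected trial and $\alpha_k \ge \beta_{\min}\alpha'$; your explicit bound on the number of rejections and your check that the lemma applies to every trial step are valid additions that the paper leaves implicit.
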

\begin{proof}
    The \refLine{line:ls_backtracking} in \Cref{alg:backtracking_linesearch} multiplies $\alpha_k$ by $\beta$ at each rejection, satisfying $0<\beta_{\min} \le \beta \leq \beta_{\max}<1$.
    For the number of backtracking rejections $r$, we have $\alpha_k\le\beta_{\max}^r$.
    Thus, $r$ is finite since the backtracking procedure terminates for sufficiently small $\alpha_k$ by \cref{lem:backtracking_finite}.
    If $r=0$, $\alpha_k=1\ge\overline{\alpha}$.
    Otherwise, $\alpha_k \geq \beta_{\min} \alpha'_k$ where $\alpha'_k$ is the last rejected step size. By \cref{lem:backtracking_finite}, we have $\alpha'_k > \frac{2(1-c)m}{L}$, and thus
    \begin{equation*}
        \alpha_k \ge \beta_{\min}\alpha'_k > \beta_{\min} \frac{2(1-c)m}{L} \geq \overline{\alpha}.
    \end{equation*}
    This completes the proof.
    \myQED
\end{proof}

\subsection{Bound for the case \texorpdfstring{$\mu_k=0$}{mu k = 0}}

In this subsection, we lower bound the sum of actual function decreases $\delta_k$ over $k \in K^0$.
Recall that $K^0$ denotes the set of iteration indices $k$ such that $\mu_k = 0$, as in \cref{eq:def_K0_Kplus}.
To this end, we establish that the sum of $\Delta_k$ over $k \in K^0$ is upper bounded, where $\Delta_k$ is the error-absorbing term in the relaxed Armijo condition (\cref{eq:relaxed_Armijo_condition}).
We introduce the following constant $\Delta_{\max}$, which serves as an upper bound for $\Delta_k$ of $k \in K^0$.
\begin{equation*}
    \Delta_{\max} \defeq \frac{2\feps}{1-\feps} \max(1, \overline{f}(x_{0}), -\overline{f}^*).
\end{equation*}

\begin{lemma}\label{lem:delta_bound_and_sum}
    Under \cref{ass:bounded,ass:LSmooth,ass:BBounds}, the sum of $\Delta_k$ over $k \in K^0$ satisfies
    \begin{equation*}
        \sum_{k \in K^0} \Delta_k \le \overline{f}(x_0) - \overline{f}^* + \Delta_{\max}.
    \end{equation*}
\end{lemma}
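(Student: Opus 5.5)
The plan is to exploit the ordering structure that the acceptance rule \cref{eq:mu_0_condition} imposes on the iterations in $K^0$, and then telescope. Let $K^0 = \{k_1 < k_2 < \cdots < k_p\}$. Since $k=0$ always satisfies \cref{eq:mu_0_condition}, the set $K^0$ is nonempty and $k_1 = 0$. Here $\Delta_{k}$ denotes the error-absorbing term evaluated at the step size $\alpha_k$ finally accepted by \cref{alg:backtracking_linesearch}. That step exists and is finite by \cref{prop:backtracking_summary}, which is where \cref{ass:LSmooth,ass:BBounds} enter.

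The first key step is a one-step inequality between consecutive elements of $K^0$. For each $i = 1, \ldots, p-1$, the index $k_{i+1}$ was admitted to $K^0$, so by \cref{eq:mu_0_condition}
\begin{equation*}
    \overline{f}(x_{k_{i+1}}) \le \min_{j \in K^0,\, j < k_{i+1}} \qty(\overline{f}(x_j) - \Delta_j) \le \overline{f}(x_{k_i}) - \Delta_{k_i}.
\end{equation*}
Rearranging gives $\Delta_{k_i} \le \overline{f}(x_{k_i}) - \overline{f}(x_{k_{i+1}})$. Summing over $i = 1, \ldots, p-1$ telescopes to
\begin{equation*}
    \sum_{i=1}^{p-1} \Delta_{k_i} \le \overline{f}(x_{k_1}) - \overline{f}(x_{k_p}) = \overline{f}(x_0) - \overline{f}(x_{k_p}) \le \overline{f}(x_0) - \overline{f}^*,
\end{equation*}
where the last inequality uses \cref{eq:ass_bounded_overline_f}, which follows from \cref{ass:bounded}.

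The second key step handles the single remaining term $\Delta_{k_p}$, which is not covered by any telescoping inequality. I will show that $\Delta_{k_p} \le \Delta_{\max}$; in fact the argument gives $\Delta_k \le \Delta_{\max}$ for every $k \in K^0$. By \cref{eq:relaxed_Armijo_condition},
\begin{equation*}
    \Delta_{k_p} = \frac{2\feps}{1-\feps}\max\qty(1, \overline{f}(x_{k_p}), -\overline{f}(x_{k_p+1})).
\end{equation*}
The three entries are bounded as follows.
\begin{itemize}
    \item The one-step inequality shows that $\overline{f}(x_{k_i})$ is non-increasing along $K^0$, because each $\Delta_j \ge 0$. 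Hence $\overline{f}(x_{k_p}) \le \overline{f}(x_0)$.
    \item By \cref{eq:ass_bounded_overline_f}, $-\overline{f}(x_{k_p+1}) \le -\overline{f}^*$.
\end{itemize}
Monotonicity of $\max$ then yields $\Delta_{k_p} \le \Delta_{\max}$. Adding this to the telescoped bound gives the claim.

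I expect no real obstacle; the only points needing care are bookkeeping. First, the last index of $K^0$ has no successor in $K^0$, which is exactly why the extra $\Delta_{\max}$ appears in the bound. Second, $\Delta_k \ge 0$, since $\feps \ge 0$ and the $\max$ is at least $1$; this is what makes the chain of values $\overline{f}(x_{k_i})$ monotone.
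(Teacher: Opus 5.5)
Your proof is correct and follows essentially the same route as the paper. Both arguments telescope the inequalities $\Delta_{k_i} \le \overline{f}(x_{k_i}) - \overline{f}(x_{k_{i+1}})$, which \cref{eq:mu_0_condition} gives for consecutive elements of $K^0$, and both bound the leftover last term by $\Delta_{\max}$ using $\overline{f}(x_{k_\ell}) \le \overline{f}(x_0)$ and $\overline{f} \ge \overline{f}^*$; the only cosmetic difference is that the paper gets $\overline{f}(x_{k_\ell}) \le \overline{f}(x_0)$ directly by taking $j = 0$ in the minimum, whereas you chain the one-step inequalities.
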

\begin{proof}
    Let $(k_i)_{i=0}^{\ell}$ with $\ell \geq 0$ be the elements of $K^0$ in increasing order.
    We have $\overline{f}(x_{k_{\ell}}) \le \overline{f}(x_0)$ from \cref{eq:mu_0_condition} with $k=k_{\ell}$, where the case $\ell=0$ follows from $k_0=0$.
    Combining this with \cref{ass:bounded}, we have $\Delta_{k_{\ell}} \le \Delta_{\max}$.
    For all $i < \ell$, \cref{eq:mu_0_condition} with $j=k_i$ and $k=k_{i+1}$ yields
    \begin{equation}\label{eq:Delta_i_for_tele_sum}
        \Delta_{k_i} \leq \overline{f}(x_{k_i}) - \overline{f}(x_{k_{i+1}}),
    \end{equation}
    and summing $\Delta_{k}$ over $k \in K^0$ gives
    \begin{align*}
        \sum_{k \in K^0} \Delta_{k}
         & \le \sum_{i=0}^{\ell-1} (\overline{f}(x_{k_i}) - \overline{f}(x_{k_{i+1}})) + \Delta_{k_{\ell}} &  & (\text{\cref{eq:Delta_i_for_tele_sum}})                                                            \\
         & = \overline{f}(x_0) - \overline{f}(x_{k_{\ell}}) + \Delta_{k_{\ell}}                            &  & (\text{telescoping sum})                                                                           \\
         & \leq \overline{f}(x_0) - \overline{f}^* + \Delta_{\max}.                                        &  & (-\overline{f}(x_{k_{\ell}}) \le -\overline{f}^* \text{ and } \Delta_{k_{\ell}} \le \Delta_{\max})
    \end{align*}
    This completes the proof.
    \myQED
\end{proof}

We now bound the sum of $\delta_k = f(x_k) - f(x_{k+1})$ over $k \in K^0$ with $\norm{g_{k}}^2$ as follows.
\begin{lemma}\label{lem:sum_small_delta_k_bound_K0}
    Under \cref{ass:bounded,ass:LSmooth,ass:BBounds}, the sum of $\delta_k$ over $k \in K^0$ satisfies
    \begin{equation*}
        \sum_{k \in K^0} \delta_{k} \geq \frac{c \overline{\alpha}}{M} \sum_{k \in K^0} \norm{g_{k}}^2 - \frac{2}{1-\feps} \qty(\overline{f}(x_0) - \overline{f}^* + \Delta_{\max}).
    \end{equation*}
\end{lemma}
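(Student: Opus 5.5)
Our approach is to obtain, for each individual $k \in K^0$, a lower bound on $\delta_k$ consisting of a descent term $\frac{c\overline{\alpha}}{M}\norm{g_k}^2$ minus an error term proportional to $\Delta_k$. Summing over $K^0$ and applying \cref{lem:delta_bound_and_sum} to control $\sum_{k\in K^0}\Delta_k$ then finishes the argument. The constant $\frac{2}{1-\feps}$ in the statement suggests that the per-iteration error term should be $\frac{2}{1-\feps}\Delta_k$, that is, $\Delta_k + \frac{1+\feps}{1-\feps}\Delta_k$.

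\textbf{Step 1 (observed decrease).} Fix $k\in K^0$, so that $\mu_k=0$. The relaxed Armijo condition \cref{eq:relaxed_Armijo_condition} gives
\[
\overline{\delta}_k \ge -c\alpha_k g_k^\top d_k - \Delta_k .
\]
By \cref{eq:BBounds_results_first} with $\mu_k=0$ we have $-g_k^\top d_k \ge \norm{g_k}^2/M$, and \cref{prop:backtracking_summary} gives $\alpha_k\ge\overline{\alpha}$. Together these yield
\[
\overline{\delta}_k \ge \frac{c\overline{\alpha}}{M}\norm{g_k}^2 - \Delta_k .
\]

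\textbf{Step 2 (transfer to the true decrease).} We need an inequality of the form $\delta_k \ge \overline{\delta}_k - (\text{const})\,\Delta_k$. This is the reverse of \cref{lem:error_bound_on_function_evaluations}, so the plan is to reuse the argument of \cref{lem:error_delta_bounded_by_Delta}. The bound \cref{eq:delta_k_error_bound} combined with \cref{eq:fx_two_cases} and \cref{eq:overline_delta_k_lower_bound} controls $\abs{\delta_k-\overline{\delta}_k}$ by $\frac{1+\feps}{1-\feps}\Delta_k$, and that derivation is symmetric in sign. Hence
\[
\delta_k \ge \overline{\delta}_k - \frac{1+\feps}{1-\feps}\Delta_k \ge \frac{c\overline{\alpha}}{M}\norm{g_k}^2 - \Bigl(1+\frac{1+\feps}{1-\feps}\Bigr)\Delta_k ,
\]
and $1+\frac{1+\feps}{1-\feps}=\frac{2}{1-\feps}$, which matches the constant in the statement.

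\textbf{Step 3 (summation).} Summing the Step 2 bound over $k\in K^0$ and invoking \cref{lem:delta_bound_and_sum} gives exactly the claimed inequality.

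The main obstacle is Step 2, specifically checking that the two-sided bound really is available. The proof of \cref{lem:error_delta_bounded_by_Delta} does establish $\abs{\delta_k-\overline{\delta}_k}\le\frac{1+\feps}{1-\feps}\Delta_k$ in its displayed chain; the lemma merely records one direction of it. I would therefore cite that chain directly, or restate it as the two-sided estimate.

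One further point needs verifying in that chain: the bound $\max(\overline f(x_{k+1}),-\overline f(x_k))\le\max(\overline f(x_k),-\overline f(x_{k+1}))+\Delta_k$. It uses $\overline f(x_{k+1})\le\overline f(x_k)+\Delta_k$, which follows from \cref{eq:overline_delta_k_lower_bound}, and this in turn rests on $-g_k^\top d_k\ge0$, guaranteed by \cref{ass:BBounds}. All of this holds regardless of the sign of $\delta_k$. Consequently no hypothesis of the form $f(x_{k+1})\le f(x_k)$ is required, unlike in \cref{lem:error_bound_on_function_evaluations}.
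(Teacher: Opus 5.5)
Your proposal is correct and follows essentially the same route as the paper: bound $\overline{\delta}_k+\Delta_k$ below by $\frac{c\overline{\alpha}}{M}\norm{g_k}^2$ via the relaxed Armijo condition, pass to $\delta_k$ with \cref{lem:error_delta_bounded_by_Delta} to get the $\frac{2}{1-\feps}\Delta_k$ error term, then sum and apply \cref{lem:delta_bound_and_sum}. Your worry in Step 2 is unnecessary, because \cref{lem:error_delta_bounded_by_Delta} as stated, $\overline{\delta}_k \le \delta_k + \frac{1+\feps}{1-\feps}\Delta_k$, is already the direction $\delta_k \ge \overline{\delta}_k - \frac{1+\feps}{1-\feps}\Delta_k$ that you need, so you can cite it directly without reopening its proof.
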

\begin{proof}
    The relaxed Armijo condition (\cref{eq:relaxed_Armijo_condition}) at iteration $k \in K^0$ yields
    \begin{align}
        \overline{\delta}_{k} + \Delta_{k}
         & = \overline{f}(x_{k}) - \overline{f}(x_{k+1}) + \Delta_{k} &  & (\text{\cref{eq:def_overline_small_delta_k}})                                   \notag     \\
         & \ge -c \alpha_{k} g_{k}^\top d_{k}                         &  & (\text{\cref{eq:relaxed_Armijo_condition}}) \notag                                         \\
         & \ge \frac{c \overline{\alpha}}{M+\mu_{k}} \norm{g_{k}}^2   &  & (\text{\cref{prop:backtracking_summary} and \cref{eq:BBounds_results_first}})\notag        \\
         & = \frac{c \overline{\alpha}}{M} \norm{g_{k}}^2.            &  & (\mu_{k} = 0 \text{ since } k \in K^0) \label{eq:lower_bound_overline_delta_plus_Delta_K0}
    \end{align}
    Using \cref{lem:error_delta_bounded_by_Delta}, we also have
    \begin{equation}
        \overline{\delta}_k + \Delta_{k} \le \delta_k + \frac{1+\feps}{1-\feps} \Delta_k + \Delta_k = \delta_k + \frac{2}{1-\feps} \Delta_{k}. \label{eq:upper_bound_overline_delta_plus_Delta_K0}
    \end{equation}
    Then, combining \cref{eq:lower_bound_overline_delta_plus_Delta_K0,eq:upper_bound_overline_delta_plus_Delta_K0} and summing over $k \in K^0$ gives
    \begin{equation*}
        \sum_{k \in K^0} \frac{c \overline{\alpha}}{M} \norm{g_{k}}^2   \leq \sum_{k \in K^0} \delta_{k} + \frac{2}{1-\feps} \sum_{k \in K^0} \Delta_{k}
        \leq \sum_{k \in K^0} \delta_{k} + \frac{2}{1-\feps} \qty(\overline{f}(x_0) - \overline{f}^* + \Delta_{\max}),
    \end{equation*}
    where the last inequality follows from \cref{lem:delta_bound_and_sum}.
    Rearranging this completes the proof.
    \myQED
\end{proof}

\subsection{Bound for the case \texorpdfstring{$\mu_k > 0$}{mu k > 0}}

Next, we show that the sum of $\delta_{k}$ over $k \in K^+$ is lower bounded.
Recall that $K^+$ be a set of iteration indices $k$ such that $\mu_k > 0$ as defined in \cref{eq:def_K0_Kplus}, and that $\mu_k$ is defined as $\theta_k \sqrt{\varsigma + \sum_{j \in K^+,\ j < k} \norm{g_{j}}^2}$ with $\theta_k \in [\theta_{\min}, \theta_{\max}]$ in \refLine{line:update_theta} of \cref{alg:regularized_qn}.

Before proceeding to the main proof, we present \cref{lem:sum_norm_g_over_mu_geq_2sqrt_leq_log}, providing standard inequalities in the AdaGrad-Norm algorithm analysis~\citep[Lemma~3.2]{wardAdaGradStepsizesSharp2020},~\citep[Lemma~3.1]{Gratton08022024}.
Its proof is deferred to \cref{sec:proof_of_sum_norm_g_over_mu_geq_2sqrt_leq_log}.
\begin{lemma}\label{lem:sum_norm_g_over_mu_geq_2sqrt_leq_log}
    In \cref{alg:regularized_qn}, we have
    \begin{align*}
        \sum_{{k} \in K^+}\frac{\norm{g_{k}}^2}{\mu_{k}^2}
         & \le \frac{1}{\theta_{\min}^2} \qty(\ln\qty(\varsigma+\sum_{ k \in K^+} \norm{g_{k}}^2) - \ln(\varsigma)),  \\
        \sum_{{k} \in K^+} \frac{\norm{g_{k}}^2}{\mu_{k}}
         & \ge \frac{1}{\theta_{\max}} \qty(\sqrt{\varsigma + \sum_{{k} \in K^+} \norm{g_{k}}^2} - \sqrt{\varsigma}).
    \end{align*}
\end{lemma}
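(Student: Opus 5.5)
We prove the two inequalities separately. Both reduce to the classical AdaGrad-Norm bounds once we fix notation for the partial sums. Enumerate $K^+$ in increasing order as $k_1<k_2<\dots<k_p$, and set $a_i \defeq \norm{g_{k_i}}^2 \ge 0$ and $S_i \defeq \varsigma + \sum_{t=1}^{i} a_t$, with $S_0 \defeq \varsigma$. By \refLine{line:update_theta} of \cref{alg:regularized_qn}, the regularization parameter at $k_i$ is $\mu_{k_i} = \theta_{k_i}\sqrt{S_i}$. Here the sum runs over indices of $K^+$ up to and including $k_i$, since $k$ is added to $K^+$ before $\mu_k$ is computed. Hence $\theta_{\min}\sqrt{S_i} \le \mu_{k_i} \le \theta_{\max}\sqrt{S_i}$, and $\mu_{k_i} > 0$ because $\varsigma > 0$. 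If $K^+$ is empty, both sides of both inequalities are zero and there is nothing to prove.

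\textbf{Upper bound.} We have
\[
\sum_{k\in K^+}\frac{\norm{g_k}^2}{\mu_k^2} \le \frac{1}{\theta_{\min}^2}\sum_{i=1}^p \frac{a_i}{S_i} = \frac{1}{\theta_{\min}^2}\sum_{i=1}^p \frac{S_i - S_{i-1}}{S_i}.
\]
We then use $1 - \frac{1}{t} \le \ln t$ for $t = S_i/S_{i-1} \ge 1$, which gives $\frac{S_i - S_{i-1}}{S_i} \le \ln S_i - \ln S_{i-1}$. Equivalently, the term is bounded by $\int_{S_{i-1}}^{S_i} \frac{ds}{s}$, because $1/s \ge 1/S_i$ on that interval. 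Telescoping yields $\ln S_p - \ln\varsigma$, which is the claimed bound.

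\textbf{Lower bound.} Similarly,
\[
\sum_{k\in K^+}\frac{\norm{g_k}^2}{\mu_k} \ge \frac{1}{\theta_{\max}}\sum_{i=1}^p \frac{S_i - S_{i-1}}{\sqrt{S_i}}.
\]
For each term we use
\[
\frac{S_i - S_{i-1}}{\sqrt{S_i}} \ge \frac{S_i - S_{i-1}}{\sqrt{S_i}+\sqrt{S_{i-1}}} = \sqrt{S_i} - \sqrt{S_{i-1}},
\]
which holds because $\sqrt{S_{i-1}} \ge 0$ and $S_i - S_{i-1} \ge 0$. Telescoping gives $\sqrt{S_p} - \sqrt{\varsigma}$, which is the claim.

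\textbf{Main point of care.} The only delicate step is the indexing: we must confirm that $\mu_k$ includes $\norm{g_k}^2$ itself, so that the denominators are $S_i$ rather than $S_{i-1}$. Otherwise the logarithmic bound would fail without an extra boundedness assumption on $\norm{g_k}$. Reading \refLine{line:update_K_plus} and \refLine{line:update_theta} in order confirms the inclusive sum, as also written in \cref{eq:offo_based_update}. Everything else is elementary telescoping, valid for arbitrary $\theta_k \in [\theta_{\min},\theta_{\max}]$.
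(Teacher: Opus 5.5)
Your proof is correct and is essentially the paper's argument. The paper also enumerates $K^+$ and uses the inclusive partial sums from \cref{eq:offo_based_update}. It then applies $x/y \le \ln y - \ln(y-x)$ (your $1-1/t\le \ln t$) and $x/\sqrt{y} \ge \sqrt{y}-\sqrt{y-x}$ term by term, and telescopes. Your care about the inclusive indexing is well placed: the sentence just before the lemma in the main text writes the sum with $j<k$, and that version would not support the logarithmic bound.
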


We now lower bound the sum of $\delta_k$ over $k \in K^+$ with $\norm{g_{k}}^2$.
We define the following constants for later use:
\begin{equation*}
    C_1 \defeq \frac{\overline{\alpha}}{\theta_{\max}}, \qquad
    C_2 \defeq \frac{M\overline{\alpha}+L/2}{\theta_{\min}^2}.
\end{equation*}
\begin{lemma}\label{lem:sum_small_delta_k_bound_Kplus}
    Under \cref{ass:LSmooth,ass:BBounds}, the sum of $\delta_k$ over $k \in K^+$ satisfies
    \begin{equation*}
        \sum_{k \in K^+} \delta_k \ge
        C_1 \qty(\sqrt{\varsigma + \sum_{k \in K^+} \norm{g_{k}}^2} - \sqrt{\varsigma})
        - C_2 \qty(\ln\qty(\varsigma+\sum_{k \in K^+} \norm{g_{k}}^2) - \ln(\varsigma)).
    \end{equation*}
\end{lemma}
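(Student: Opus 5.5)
The plan is to prove a per-iteration lower bound on the \emph{actual} decrease $\delta_k$ for each $k \in K^+$, sum it over $K^+$, and then apply \cref{lem:sum_norm_g_over_mu_geq_2sqrt_leq_log}. Because $\delta_k$ is defined through the true $f$, I will not use the relaxed Armijo condition or $\Delta_k$ here. I only need $L$-smoothness at the accepted point $x_{k+1} = x_k + \alpha_k d_k$, the bounds on $\alpha_k$, and the bounds \cref{eq:BBounds_results_first,eq:BBounds_results_second}. This is why only \cref{ass:LSmooth,ass:BBounds} are needed.

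\textbf{Step 1 (smoothness).} By \cref{ass:LSmooth},
\begin{equation*}
    \delta_k \ge -\alpha_k g_k^\top d_k - \frac{L}{2}\alpha_k^2 \norm{d_k}^2 .
\end{equation*}

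\textbf{Step 2 (linear term).} Since $-g_k^\top d_k \ge 0$ and $\alpha_k \ge \overline{\alpha}$ by \cref{prop:backtracking_summary}, \cref{eq:BBounds_results_first} gives $-\alpha_k g_k^\top d_k \ge \overline{\alpha}\norm{g_k}^2/(M+\mu_k)$. For $\mu_k > 0$ we have the elementary identity
\begin{equation*}
    \frac{1}{M+\mu_k} = \frac{1}{\mu_k} - \frac{M}{\mu_k(M+\mu_k)} \ge \frac{1}{\mu_k} - \frac{M}{\mu_k^2}.
\end{equation*}
Hence $-\alpha_k g_k^\top d_k \ge \overline{\alpha}\norm{g_k}^2/\mu_k - M\overline{\alpha}\norm{g_k}^2/\mu_k^2$.

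\textbf{Step 3 (quadratic term).} Using $\alpha_k \le 1$ and \cref{eq:BBounds_results_second} with $(m+\mu_k)^2 \ge \mu_k^2$, we get $\frac{L}{2}\alpha_k^2\norm{d_k}^2 \le \frac{L}{2}\norm{g_k}^2/\mu_k^2$.

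\textbf{Step 4 (combine per iteration).} Together, Steps 1--3 give, for every $k \in K^+$,
\begin{equation*}
    \delta_k \ge \overline{\alpha}\,\frac{\norm{g_k}^2}{\mu_k} - \qty(M\overline{\alpha} + \frac{L}{2})\frac{\norm{g_k}^2}{\mu_k^2}.
\end{equation*}

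\textbf{Step 5 (sum over $K^+$).} I sum this inequality over $k \in K^+$. The first sum is bounded below by the second inequality of \cref{lem:sum_norm_g_over_mu_geq_2sqrt_leq_log}, which contributes $C_1(\sqrt{\varsigma+\sum\norm{g_k}^2}-\sqrt{\varsigma})$ with $C_1 = \overline{\alpha}/\theta_{\max}$. The second sum is bounded above by the first inequality of that lemma, giving the $-C_2(\ln(\cdot)-\ln\varsigma)$ term with $C_2 = (M\overline{\alpha}+L/2)/\theta_{\min}^2$. This yields exactly the claimed bound.

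\textbf{Main subtlety.} The only delicate point is Step 2. We need a \emph{lower} bound of the form $\norm{g_k}^2/\mu_k$, but $M+\mu_k$ sits in the denominator, so the $M$-dependence must be traded into a $1/\mu_k^2$ error term. The identity above does this cleanly. One must also check that the sign of the linear term permits replacing $\alpha_k$ by $\overline{\alpha}$ there, which holds since $-g_k^\top d_k \ge 0$. In the quadratic term, by contrast, $\alpha_k$ is replaced by its upper bound $1$.
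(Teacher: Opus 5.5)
Your proposal is correct and follows the paper's proof essentially step for step. It uses the same $L$-smoothness bound at $x_{k+1}$, the same $\alpha_k \ge \overline{\alpha}$ in the linear term, and $\alpha_k \le 1$ with $(m+\mu_k)^2 \ge \mu_k^2$ in the quadratic term, then sums via \cref{lem:sum_norm_g_over_mu_geq_2sqrt_leq_log}; your identity $\frac{1}{M+\mu_k} = \frac{1}{\mu_k} - \frac{M}{\mu_k(M+\mu_k)}$ is just an equivalent form of the paper's bound $\frac{1}{1+M/\mu_k} \ge 1 - \frac{M}{\mu_k}$.
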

\begin{proof}
    From the $L$-smoothness (\cref{ass:LSmooth}), for $k \in K^+$ we have
    \begin{align}
        \delta_k & = f(x_k) - f(x_k + \alpha_{k} d_{k})                                                          &  & (\text{\cref{eq:def_small_delta_k} and } x_{{k}+1}=x_{k} + \alpha_{k} d_{k})      \notag \\
                 & \ge -\alpha_{k} g_{k}^\top d_{k} - \frac{\alpha_{k}^2 L}{2} \norm{d_{k}}^2                    &  & (\text{\cref{ass:LSmooth}})      \notag                                                  \\
                 & \ge \qty(\frac{\alpha_{k}}{M+\mu_{k}} - \frac{\alpha_{k}^2 L}{2(m+\mu_{k})^2}) \norm{g_{k}}^2 &  & (\text{\cref{eq:BBounds_results_first,eq:BBounds_results_second}}) \notag                \\
                 & \ge \qty(\frac{\overline{\alpha}}{M+\mu_{k}} - \frac{L}{2(m+\mu_{k})^2}) \norm{g_{k}}^2.      &  & (\text{\cref{prop:backtracking_summary} and $\alpha_k \leq 1$})
        \label{eq:mu_positive_diff}
    \end{align}
    We can further bound the terms in \cref{eq:mu_positive_diff} as
    \begin{equation}\label{eq:mu_positive_diff_bound}
        \frac{\overline{\alpha}}{M+\mu_{k}} = \frac{\overline{\alpha}}{\mu_{k}} \frac{1}{1+\frac{M}{\mu_{k}}} \geq \frac{\overline{\alpha}}{\mu_{k}} \qty(1-\frac{M}{\mu_{k}}) = \frac{\overline{\alpha}}{\mu_{k}} - \frac{M\overline{\alpha}}{\mu_{k}^2}, \quad
        \frac{L}{2(m+\mu_{k})^2} \le \frac{L}{2\mu_{k}^2}.
    \end{equation}
    Thus, applying \cref{eq:mu_positive_diff_bound} to \cref{eq:mu_positive_diff} yields
    \begin{equation}\label{eq:mu_positive_diff_desired}
        \delta_k \ge \qty(\frac{\overline{\alpha}}{\mu_{k}} - \frac{M\overline{\alpha}+L/2}{\mu_{k}^2}) \norm{g_{k}}^2.
    \end{equation}
    Summing \cref{eq:mu_positive_diff_desired} over $k \in K^+$ and applying \cref{lem:sum_norm_g_over_mu_geq_2sqrt_leq_log} yield
    \begin{align*}
              & \sum_{{k} \in K^+} \delta_k \ge \sum_{{k} \in K^+} \qty(\frac{\overline{\alpha}}{\mu_{k}} - \frac{M\overline{\alpha}+L/2}{\mu_{k}^2})\norm{g_{k}}^2                                                                                                 \\
        \ge{} & \frac{\overline{\alpha}}{\theta_{\max}} \qty(\sqrt{\varsigma + \sum_{{k} \in K^+} \norm{g_{k}}^2} - \sqrt{\varsigma}) - \frac{M\overline{\alpha}+L/2}{\theta_{\min}^2} \qty(\ln\qty(\varsigma+\sum_{{k} \in K^+} \norm{g_{k}}^2) - \ln(\varsigma)),
    \end{align*}
    which yields the desired result.
    \myQED
\end{proof}

\subsection{Global Convergence Rate}

Finally, we combine the results for $k \in K^0$ stated in \cref{lem:sum_small_delta_k_bound_K0} and $k \in K^+$ stated in \cref{lem:sum_small_delta_k_bound_Kplus} to derive an upper bound on the total sum of $\norm{g_k}^2$.
We define a constant $F$ as follows:
\begin{equation}
    F \defeq f(x_0) - f^* + \frac{2}{1-\feps} \qty(\overline{f}(x_0) - \overline{f}^* + \Delta_{\max}) + C_1 \sqrt{\varsigma} - C_2 \ln(\varsigma).
    \label{eq:def_F}
\end{equation}
Then, we can obtain the following unified bound of the sum of $\norm{g_k}^2$.
\begin{lemma}\label{lem:sum_norm_g_k_bound}
    Under \cref{ass:bounded,ass:LSmooth,ass:BBounds}, the sum of $\norm{g_{k}}^2$ over $k \in K^0, K^+$ satisfies
    \begin{equation*}
        F \ge
        \frac{c \overline{\alpha}}{M} \sum_{k \in K^0} \norm{g_{k}}^2 +
        C_1 \sqrt{\varsigma + \sum_{k \in K^+} \norm{g_{k}}^2} -
        C_2 \ln\qty(\varsigma+\sum_{k \in K^+} \norm{g_{k}}^2),
    \end{equation*}
\end{lemma}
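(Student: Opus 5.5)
The bound follows by telescoping the true decreases over all iterations and then inserting the two partial lower bounds already proved. Since $K = K^0 \cup K^+$ is a disjoint partition of $\{0,1,\ldots,k_{\max}-1\}$ and $x_{k+1} = x_k + \alpha_k d_k$ for every $k$, the definition \cref{eq:def_small_delta_k} gives
\begin{equation*}
    \sum_{k \in K^0} \delta_k + \sum_{k \in K^+} \delta_k = \sum_{k=0}^{k_{\max}-1} \bigl(f(x_k) - f(x_{k+1})\bigr) = f(x_0) - f(x_{k_{\max}}) \le f(x_0) - f^*,
\end{equation*}
where the last step uses \cref{ass:bounded}. The telescoping uses only the true function $f$, so no noise enters at this stage.

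Next, bound the left-hand side from below term by term. For the first sum, apply \cref{lem:sum_small_delta_k_bound_K0}:
\begin{equation*}
    \sum_{k\in K^0}\delta_k \ge \frac{c\overline{\alpha}}{M}\sum_{k\in K^0}\norm{g_k}^2 - \frac{2}{1-\feps}\bigl(\overline{f}(x_0)-\overline{f}^*+\Delta_{\max}\bigr).
\end{equation*}
For the second sum, apply \cref{lem:sum_small_delta_k_bound_Kplus}:
\begin{equation*}
    \sum_{k\in K^+}\delta_k \ge C_1\sqrt{\varsigma+\textstyle\sum_{k\in K^+}\norm{g_k}^2} - C_1\sqrt{\varsigma} - C_2\ln\bigl(\varsigma+\textstyle\sum_{k\in K^+}\norm{g_k}^2\bigr) + C_2\ln(\varsigma).
\end{equation*}
Adding these two inequalities and combining with the telescoping bound, move the constants $\frac{2}{1-\feps}(\overline{f}(x_0)-\overline{f}^*+\Delta_{\max})$, $C_1\sqrt{\varsigma}$ and $-C_2\ln(\varsigma)$ to the left-hand side. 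Their sum together with $f(x_0)-f^*$ is exactly $F$ as defined in \cref{eq:def_F}, which yields the claimed inequality.

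There is no substantive obstacle. The only points to check are the following.
\begin{itemize}
    \item The sign bookkeeping: $+C_2\ln(\varsigma)$ on the right becomes $-C_2\ln(\varsigma)$ inside $F$, and $\ln\varsigma<0$ because $\varsigma<1$. This affects only the magnitude of $F$, not the validity of the bound.
    \item The degenerate cases $K^0=\emptyset$ or $K^+=\emptyset$, where the corresponding sums are zero. In fact $K^0\neq\emptyset$ always, since $0\in K^0$. If $K^+=\emptyset$, both sides of \cref{lem:sum_small_delta_k_bound_Kplus} vanish, so the argument goes through unchanged.
\end{itemize}
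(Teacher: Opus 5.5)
Your proposal is correct and follows the paper's proof: telescope $\sum_{k\in K}\delta_k = f(x_0)-f(x_{k_{\max}}) \le f(x_0)-f^*$, split the sum over the partition $K=K^0\cup K^+$, insert the lower bounds from \cref{lem:sum_small_delta_k_bound_K0,lem:sum_small_delta_k_bound_Kplus}, and collect the constants into $F$. Your remarks on the sign of $\ln\varsigma$ and the empty-set cases are accurate. They are extra checks the paper leaves implicit.
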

\begin{proof}
    Since $f(x_{k_{\max}}) \ge f^*$ by \cref{ass:bounded} and $K=K^0 \cup K^+$, we have
    \begin{equation*}
        f(x_0) - f^* \geq f(x_0) - f(x_{k_{\max}}) = \sum_{k \in K} \delta_k = \sum_{k \in K^0} \delta_k + \sum_{k \in K^+} \delta_k.
    \end{equation*}
    From \cref{lem:sum_small_delta_k_bound_K0,lem:sum_small_delta_k_bound_Kplus}, we can further bound as
    \begin{multline*}
        f(x_0) - f^*
        \ge \frac{c \overline{\alpha}}{M} \sum_{k \in K^0} \norm{g_{k}}^2
        - \frac{2}{1-\feps} \qty(\overline{f}(x_0) - \overline{f}^* + \Delta_{\max})\\
        +C_1 \sqrt{\varsigma + \sum_{k \in K^+} \norm{g_{k}}^2}
        - C_1 \sqrt{\varsigma}
        - C_2 \ln\qty(\varsigma+\sum_{k \in K^+} \norm{g_{k}}^2)
        + C_2 \ln(\varsigma).
    \end{multline*}
    Substituting \cref{eq:def_F} yields the desired bound, concluding the proof.
    \myQED
\end{proof}

Now, we establish the following convergence rate for \cref{alg:regularized_qn}.
\begin{theorem}\label{thm:gc}
    Under \cref{ass:bounded,ass:LSmooth,ass:BBounds}, the average of squared gradient norms generated by \cref{alg:regularized_qn} satisfies
    \begin{equation*}
        \frac{1}{k_{\max}} \sum_{k=0}^{k_{\max}-1} \norm{g_k}^2 = \order{\frac{1}{k_{\max}}}.
    \end{equation*}
\end{theorem}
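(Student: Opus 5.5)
The plan is to start from \cref{lem:sum_norm_g_k_bound} and show that both $S_0 \defeq \sum_{k \in K^0} \norm{g_k}^2$ and $S_+ \defeq \sum_{k \in K^+} \norm{g_k}^2$ are bounded by constants independent of $k_{\max}$. Once that holds, $\sum_{k=0}^{k_{\max}-1} \norm{g_k}^2 = S_0 + S_+$ is bounded by a constant. Dividing by $k_{\max}$ then gives the $\order{1/k_{\max}}$ rate.

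First, write $s \defeq \varsigma + S_+ \ge \varsigma$ and $h(s) \defeq C_1\sqrt{s} - C_2 \ln s$. \Cref{lem:sum_norm_g_k_bound} reads $F \ge \frac{c\overline{\alpha}}{M} S_0 + h(s)$. The key observation is that $h$ is bounded below on $(0,\infty)$, since $\sqrt{s}$ dominates $\ln s$. Minimizing $h$ gives the minimizer $s^\star = (2C_2/C_1)^2$, so $h(s) \ge h_{\min} \defeq 2C_2 - 2C_2\ln(2C_2/C_1)$. Using this with the bound above, we get $S_0 \le \frac{M}{c\overline{\alpha}}(F - h_{\min})$, which is a constant.

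Second, we bound $S_+$. Since $S_0 \ge 0$, we have $h(s) \le F$. We then need to invert this, which is the main (though mild) obstacle, because $h$ is not monotone. I would use the elementary inequality $\ln s \le \frac{C_1}{2C_2}\sqrt{s} + \ln\bigl(\tfrac{4C_2^2}{C_1^2}\bigr) - 2$, which follows from $\ln t \le t/e'$-type tangent bounds, or more simply from $\ln u \le u - 1$ applied to $u = \frac{C_1}{2C_2}\sqrt{s}$ after writing $\ln s = 2\ln\sqrt{s}$. This gives $C_2 \ln s \le \frac{C_1}{2}\sqrt{s} + C_3$ for an explicit constant $C_3$. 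Hence $\frac{C_1}{2}\sqrt{s} \le F + C_3$, so $S_+ \le s \le \bigl(2(F + C_3)/C_1\bigr)^2$. In fact the first step can reuse the same inequality: $h(s) \ge \frac{C_1}{2}\sqrt{s} - C_3 \ge -C_3$.

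Finally, $\sum_{k=0}^{k_{\max}-1} \norm{g_k}^2 = S_0 + S_+$, because $K = K^0 \cup K^+$ is a disjoint partition (\cref{eq:def_K0_Kplus}). All the constants involved ($F$, $C_1$, $C_2$, $C_3$, $\overline{\alpha}$, $M$, $c$) depend only on problem and algorithm parameters and on $x_0$, not on $k_{\max}$. Dividing by $k_{\max}$ yields the claim. It also immediately gives $\min_k \norm{g_k} \le \varepsilon$ within $\order{1/\varepsilon^2}$ iterations.
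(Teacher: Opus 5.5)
Your route is essentially the paper's. The decisive step, $C_2\ln s\le \frac{C_1}{2}\sqrt{s}+C_3$, is exactly the paper's bound $\phi(s)\ge\phi(G^\ast)$, where $\phi(G)=\frac{C_1}{2}\sqrt{G}-C_2\ln G$ and $G^\ast=(4C_2/C_1)^2$. The only structural difference is in the last step. You bound $S_0$ and $S_+$ separately and add the two bounds. The paper instead keeps the joint constraint $\frac{c\overline{\alpha}}{M}S_0+\frac{C_1}{2}\sqrt{S_+}\le F'$ and maximizes $S_0+S_+$ over it. That yields the $\max$ of the two bounds rather than their sum, so it only changes constants.

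There is, however, a slip in your explicit inequality. As written, $\ln s\le\frac{C_1}{2C_2}\sqrt{s}+\ln\bigl(\frac{4C_2^2}{C_1^2}\bigr)-2$ is false: at $\sqrt{s}=4C_2/C_1$ the left side exceeds the right by $2\ln 2$. The derivation you describe, $\ln u\le u-1$ with $u=\frac{C_1}{2C_2}\sqrt{s}$, actually gives coefficient $\frac{C_1}{C_2}$ on $\sqrt{s}$; that is just $h(s)\ge h_{\min}$ again. This would absorb all of $C_1\sqrt{s}$ and leave nothing to bound $S_+$. Use $u=\frac{C_1}{4C_2}\sqrt{s}$ instead. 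This gives $C_2\ln s\le\frac{C_1}{2}\sqrt{s}+2C_2\bigl(\ln\frac{4C_2}{C_1}-1\bigr)$, i.e.\ $C_3=-\phi(G^\ast)$, and the rest of your argument then goes through unchanged.
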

\begin{proof}
    Let us define the function $\phi\colon \mathbb{R}_{>0} \to \mathbb{R}$ as
    \begin{equation}\label{eq:phi_G}
        \phi(G) \defeq \frac{C_1}{2} \sqrt{G} - C_2 \ln(G).
    \end{equation}
    Since $\phi'(G)= \frac{C_1}{4 \sqrt{G}} - \frac{C_2}{G}$, its minimum over $G>0$ is attained at $G^\ast \defeq \qty(\frac{4C_2}{C_1})^2$ with
    \begin{equation*}
        \phi(G^\ast) = 2C_2 \qty(1 - \ln\qty(\frac{4C_2}{C_1})) = 2C_2 \ln\qty(\frac{\mathrm{e} C_1}{4C_2}),
    \end{equation*}
    where $\mathrm{e}$ is the base of the natural logarithm.
    Thus, \cref{lem:sum_norm_g_k_bound} yields
    \begin{align*}
        F
         & \ge
        \frac{c \overline{\alpha}}{M} \sum_{k \in K^0} \norm{g_{k}}^2 + \frac{C_1}{2} \sqrt{\sum_{k \in K^+} \norm{g_{k}}^2} + \phi\qty(\sum_{k \in K^+} \norm{g_{k}}^2 + \varsigma) \\
         & \ge \frac{c \overline{\alpha}}{M} \sum_{k \in K^0} \norm{g_{k}}^2 + \frac{C_1}{2} \sqrt{\sum_{k \in K^+} \norm{g_{k}}^2} + \phi(G^\ast),
    \end{align*}
    which is equivalent to
    \begin{equation}\label{eq:final_inequality_for_gc}
        0 \leq \frac{c \overline{\alpha}}{M} \sum_{k \in K^0} \norm{g_{k}}^2 + \frac{C_1}{2} \sqrt{\sum_{k \in K^+} \norm{g_{k}}^2} \leq F'
    \end{equation}
    where
    \begin{align*}
        F' & \defeq F - \phi(G^\ast)                                                                                                                                                              \\
           & = f(x_0) - f^* + \frac{2}{1-\feps} \qty(\overline{f}(x_0) - \overline{f}^* + \Delta_{\max}) + C_1 \sqrt{\varsigma} - C_2 \ln(\varsigma) - 2C_2 \ln\qty(\frac{\mathrm{e} C_1}{4C_2}).
    \end{align*}

    It remains to bound $\sum_{k\in K}\norm{g_k}^2 = \sum_{k\in K^0}\norm{g_k}^2 + \sum_{k\in K^+}\norm{g_k}^2$ given the constraint in \cref{eq:final_inequality_for_gc}.
    More generally, let $a,b>0$, $c\ge 0$ and $x,y\ge 0$, and consider the maximization problem $x+y$ subject to $a x + b\sqrt{y} \le c$.
    The maximum is achieved on the boundary $a x + b\sqrt{y}=c$ since $x+y$ is increasing in both variables.
    On this boundary, we can write $y=\left(\frac{c-a x}{b}\right)^2$, and thus $x+y$ is convex on an interval $0\le x\le c/a$, attaining its maximum at one of the endpoints.
    Evaluating at the endpoints yields the candidates $(x,y)=(c/a,0)$ and $(x,y)=(0,(c/b)^2)$, and therefore the maximum of $x+y$ is $\max\qty(c/a, (c/b)^2)$.
    Applying this result to \cref{eq:final_inequality_for_gc} gives
    \begin{align}
        \frac{1}{k_{\max}} \sum_{k=0}^{k_{\max}-1} \norm{g_k}^2
         & = \frac{1}{k_{\max}} \qty(\sum_{{k} \in K^0} \norm{g_{k}}^2 + \sum_{k \in K^+} \norm{g_{k}}^2)                    &  & (K=K^0 \cup K^+) \notag \\
         & \leq \frac{1}{k_{\max}} \max\qty(\frac{M F'}{c \overline{\alpha}}, \, \frac{4 F'^2}{C_1^2}), \label{eq:gc_result}
    \end{align}
    which completes the proof.
    \myQED
\end{proof}

As a remark, the function $\phi(G)$ in \cref{eq:phi_G} is closely related to the Lambert $W$ function~\citep{corlessLambertWFunction1996,Gratton08022024}.
For $x \in [-1/\mathrm{e},0)$, the equation $y \mathrm{e}^y = x$ admits two real negative solutions, and the smaller one is given by the second branch of Lambert $W$ function $y = W_{-1}(x) < 0$. Using this function, the equation $\phi(G)=0$ can be solved in closed form, which leads to a sharper bound in \cref{thm:gc}.
Since this is not essential for the main argument, we omit the details and refer to~\citep{Gratton08022024}.

We next interpret \cref{eq:gc_result} in the proof of \cref{thm:gc}.
The contribution of the indices in $K^0$ is analogous to the classical analysis of gradient descent. Indeed, for gradient descent with fixed step size $\alpha = 1/L$, one has
\begin{equation*}
    \frac{1}{k_{\max}}\sum_{k=0}^{k_{\max}-1} \norm{g_k}^2 \le \frac{1}{k_{\max}}\frac{2(f(x_0)-f^*)}{\alpha}.
\end{equation*}
Thus, the term associated with $K^0$ reflects the standard $\order{f(x_0)-f^*}$ dependence of first-order methods since $F'$ contains the initial gaps $f(x_0)-f^*$ and $\overline{f}(x_0)-\overline{f}^*$.
In contrast, the contribution of $K^+$ resembles the behavior of AdaGrad-Norm, whose convergence bound scales quadratically with the initial optimality gap~\citep{wardAdaGradStepsizesSharp2020}.

Finally, \cref{thm:gc} implies that the iteration complexity:
\begin{equation*}
    \min\left\{k \relmiddle| \norm{\nabla f(x_k)} \leq \gtol \right\} = \order{1/\gtol^2}.
\end{equation*}
This is a standard convergence guarantee for first-order optimization algorithms applied to $L$-smooth nonconvex functions.

\section{Practical Choices of Algorithmic Variables}\label{sec:proposed_choice}

In \cref{sec:algorithm,sec:analysis}, we introduced \cref{alg:regularized_qn} and established its convergence properties.
In this section, we discuss practical choices of algorithmic variables and parameters that lead to fast and stable performance.
The quantities to be discussed here are the matrices $B_k$, the step sizes $\alpha_k$, and the regularization parameters $\mu_k$.

\subsection{Approximate Hessian \texorpdfstring{$B_k$}{B k}}\label{sec:proposed_choice_Bk}

We first discuss the construction of the matrices $B_k$ used in \refLine{line:d_k} of \cref{alg:regularized_qn}.
The purpose of this subsection is to show that $B_k$ can satisfy \cref{ass:BBounds}, the uniform spectral bound, with a slight modification and selection of curvature pairs using the L-BFGS method.

\paragraph{Construction of $B_k$: }

We briefly note the L-BFGS method and the curvature pair notation~\citep{nocedal1999numerical,liuLimitedMemoryBFGS1989a,berahasLimitedmemoryBFGSDisplacement2022}.
For an iteration $k$, we define
\begin{equation*}
    s_k \defeq x_{k+1}-x_k,\qquad
    y_k \defeq g_{k+1}-g_k
\end{equation*}
where $g_k$ and $g_{k+1}$ are the gradients at $x_k$ and $x_{k+1}$, respectively.
Starting from an initial matrix, an approximate Hessian is obtained by successively applying BFGS updates to a sequence of curvature pairs.
Let $\{(s_{k_i},y_{k_i})\}_{i=0}^{p-1}$ be a collection of curvature pairs, where $k_i$ denotes the iteration index at which the $i$-th pair was generated.
Starting from the initial matrix $B^{(0)} = \gamma I$ with $\gamma=\norm{y_{k_0}}^2 / y_{k_0}^\top s_{k_0}$, we define $\mathrm{BFGS}(\{(s_{k_i},y_{k_i})\}_{i=0}^{p-1})$ as the matrix $B^{(p)}$ obtained by sequentially applying the BFGS update as
\begin{equation*}
    B^{(j+1)} = B^{(j)} - \frac{B^{(j)} s_{k_j} s_{k_j}^\top B^{(j)}}{s_{k_j}^\top B^{(j)} s_{k_j}} + \frac{y_{k_j} y_{k_j}^\top}{y_{k_j}^\top s_{k_j}}
\end{equation*}
for $j=0,1,\ldots,p-1$, where each curvature pair $(s_{k_j},y_{k_j})$ is applied in order.
Here, $p>0$ denotes the maximum number of stored curvature pairs and is a fixed small integer in the L-BFGS method.
When fewer than $p$ curvature pairs are available, the BFGS update is applied using all currently available pairs.

Now, we provide a sufficient condition for \cref{ass:BBounds} by the following proposition, which is a variant of existing work~\citep[Theorem 5.5]{gaoQuasiNewtonMethodsSuperlinear2018}~\citep[Lemma 1]{gowerStochasticBlockBFGS2016}. Its proof is deferred to \cref{sec:proof_of_bounded_BFGS}.
\begin{proposition}\label{prop:bounded_BFGS}
    Let $\{(s_{k_i},y_{k_i})\}_{i=0}^{p-1}$ be a fixed number of curvature pairs with $s_{k_i} \neq 0$. Assume that there exist constants $0<\lambda \leq \Lambda$ such that for all $(s,y) \in \{(s_{k_i},y_{k_i})\}_{i=0}^{p-1}$,
    \begin{align}
        y^\top s & \ge \frac{1}{\Lambda}\norm{y}^2, \label{eq:curvature_conditions1} \\
        y^\top s & \ge \lambda\norm{s}^2. \label{eq:curvature_conditions2}
    \end{align}
    Define the condition number $\kappa\defeq\Lambda/\lambda$.
    Then there exist $0<m<M$ such that
    \begin{equation}\label{eq:bounded_BFGS_mM}
        mI\preceq \mathrm{BFGS}(\{s_{k_i},y_{k_i}\}_{i=0}^{p-1}) \preceq MI,
    \end{equation}
    where
    \begin{equation*}
        M\defeq(1+p)\Lambda,\qquad
        m\defeq\qty((1+\sqrt{\kappa})^{2p}\qty(\frac{1}{\lambda}+\frac{1}{\lambda(2\sqrt{\kappa}+\kappa)}))^{-1}.
    \end{equation*}
\end{proposition}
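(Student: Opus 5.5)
The two bounds are handled separately. The upper bound comes from a trace argument. The lower bound comes from an inverse (H-form) argument that uses the strong-convexity-type condition \cref{eq:curvature_conditions2} together with the standard contraction/expansion estimate for BFGS in a scaled norm.

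\emph{Upper bound.} The initial matrix is $B^{(0)}=\gamma I$ with $\gamma=\norm{y_{k_0}}^2/y_{k_0}^\top s_{k_0}\le\Lambda$ by \cref{eq:curvature_conditions1}, so $B^{(0)}\preceq \Lambda I$. Each update subtracts the positive semidefinite term $B^{(j)}s s^\top B^{(j)}/(s^\top B^{(j)}s)$, provided $B^{(j)}\succ0$, which we verify by induction below. It then adds $yy^\top/(y^\top s)$, whose largest eigenvalue is $\norm{y}^2/y^\top s\le\Lambda$. Hence $\lambda_{\max}(B^{(j+1)})\le\lambda_{\max}(B^{(j)})+\Lambda$, and after $p$ updates $B^{(p)}\preceq(1+p)\Lambda I=MI$. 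Positive definiteness of every $B^{(j)}$ follows from the usual fact that a BFGS update with $y^\top s>0$ preserves positive definiteness, and $y^\top s\ge\lambda\norm{s}^2>0$ because $s\neq0$.

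\emph{Lower bound.} I would work with $H^{(j)}=(B^{(j)})^{-1}$, which satisfies
\[
H^{(j+1)}=(I-\rho s y^\top)H^{(j)}(I-\rho y s^\top)+\rho s s^\top,\qquad \rho=1/(y^\top s).
\]
It then suffices to show $\lambda_{\max}(H^{(p)})\le 1/m$.

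\begin{itemize}
\item The initial matrix gives $\lambda_{\max}(H^{(0)})=1/\gamma\le 1/\lambda$, since $\gamma\ge y^\top s/\norm{s}^2\ge\lambda$ by Cauchy--Schwarz and \cref{eq:curvature_conditions2}.
\item The additive term obeys $\rho\norm{s}^2\le 1/\lambda$.
\item The key estimate is $\norm{I-\rho y s^\top}_2$. Since this matrix is a rank-one perturbation of the identity, its operator norm is $\sqrt{\rho^2\norm{y}^2\norm{s}^2}$, up to the exact formula. Combining \cref{eq:curvature_conditions1} and \cref{eq:curvature_conditions2} bounds $\norm{y}\norm{s}\rho$. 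I expect the sharp version to give $\norm{I-\rho ys^\top}\le 1+\sqrt{\kappa}$, possibly after decomposing $y$ relative to $s$ and using $\norm{y}^2\le\Lambda y^\top s$ and $\norm{s}^2\le y^\top s/\lambda$, so that $\rho\norm{y}\norm{s}\le\sqrt{\kappa}$. The rank-one plus identity norm bound $1+\rho\norm{y}\norm{s}$ then gives exactly $1+\sqrt{\kappa}$.
\end{itemize}

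These estimates yield the recursion $\lambda_{\max}(H^{(j+1)})\le(1+\sqrt\kappa)^2\lambda_{\max}(H^{(j)})+1/\lambda$. Unrolling it gives a bound of the form $(1+\sqrt\kappa)^{2p}/\lambda$ plus a geometric sum $\frac{1}{\lambda}\sum_{i<p}(1+\sqrt\kappa)^{2i}$. Bounding the sum by $(1+\sqrt\kappa)^{2p}/\bigl(\lambda((1+\sqrt\kappa)^2-1)\bigr)$ and noting $(1+\sqrt\kappa)^2-1=2\sqrt\kappa+\kappa$ reproduces exactly the stated $m^{-1}=(1+\sqrt\kappa)^{2p}\bigl(\frac1\lambda+\frac{1}{\lambda(2\sqrt\kappa+\kappa)}\bigr)$. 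This confirms the plan.

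\emph{Main obstacle.} The delicate step is the norm bound $\norm{I-\rho y s^\top}_2\le 1+\sqrt\kappa$, in particular checking $\rho\norm{y}\norm{s}\le\sqrt{\kappa}$ from the two curvature conditions. Multiplying them gives $(y^\top s)^2\ge(\lambda/\Lambda)\norm{y}^2\norm{s}^2$, i.e.\ $\rho\norm y\norm s\le\sqrt\kappa$, so I expect this to go through. The strict inequality $m<M$ follows since $m\le\lambda\le\Lambda<(1+p)\Lambda$.
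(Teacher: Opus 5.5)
Your proposal is correct and follows the paper's proof essentially step for step. Both use $\lambda \le \gamma \le \Lambda$ for the initial scaling, $\norm{B_+} \le \norm{B} + \Lambda$ by dropping the negative semidefinite term, $\norm{H_+} \le (1+\sqrt{\kappa})^2\norm{H} + 1/\lambda$ via $\rho\norm{y}\norm{s} \le \sqrt{\kappa}$ and the triangle inequality, and the geometric-sum unrolling with $(1+\sqrt{\kappa})^2-1 = 2\sqrt{\kappa}+\kappa$; your explicit induction for positive definiteness and the check $m < \lambda \le \Lambda < M$ are details the paper leaves implicit, and your hedged aside about the exact norm of $I-\rho y s^\top$ is unnecessary since the triangle-inequality bound suffices.
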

Proposition~\ref{prop:bounded_BFGS} shows that, whenever \cref{eq:curvature_conditions1,eq:curvature_conditions2} hold for the stored pairs, the resulting $\mathrm{BFGS}$ matrix satisfies \cref{ass:BBounds} as expressed in \cref{eq:bounded_BFGS_mM}.

Based on \cref{prop:bounded_BFGS}, we basically adopt the following rule for forming $B_k$:
\begin{equation}\label{eq:proposed_choice_of_Bk}
    B_k=\mathrm{BFGS}(\{(s_{k_i}, \overline{y}_{k_i})\}),
\end{equation}
where
\begin{equation*}
    \overline{y}_{k_i} \defeq \theta^{\mathrm{damp}}_i y_{k_i} + (1-\theta^{\mathrm{damp}}_i) B_{k-1}s_{k_i} \\
\end{equation*}
with $\theta^{\mathrm{damp}}_i \in [0,1]$.
This parameter is determined using a damped BFGS update~\citep{powellAlgorithmsNonlinearConstraints1978,lotfiStochasticDampedLBFGS2020} and the indices $\{k_i\}_{i=0}^{p-1}$ are selected so that the modified pairs $(s_{k_i},\overline y_{k_i})$ satisfy \cref{eq:curvature_conditions1,eq:curvature_conditions2} for some constants $\lambda, \Lambda$.
The damped BFGS is a classical technique to ensure $\overline y_{k_i}^\top s_{k_i}>0$, and we adopted its simple limited-memory variant~\citep{lotfiStochasticDampedLBFGS2020}.

Since we do not employ Wolfe conditions in the line search, this damping technique plays a central role in ensuring the positive definiteness of $B_k$.
The selection of curvature pairs shares the same spirit as the cautious update~\citep{liGlobalConvergenceBFGS2001,mannelStructuredLBFGSMethod2024,mannelGlobalizationLBFGSBarzilai2025}, which only uses pairs satisfying certain curvature conditions.
By \cref{prop:bounded_BFGS}, $B_k$ satisfies the uniform spectral bound in \cref{ass:BBounds}.
This confirms that the practical construction of $B_k$ is consistent with the theoretical requirements.

\paragraph{Remarks for Practical Implementation: }

Having established that the proposed construction of $B_k$ satisfies \cref{ass:BBounds}, we now present several remarks on its practical implementation.
We begin by describing how to compute the search direction
\begin{equation*}
    d_k=-(B_k+\mu_k I)^{-1}g_k
\end{equation*}
with $B_k$ in \cref{eq:proposed_choice_of_Bk}.
When $\mu_k=0$, the direction can be computed efficiently by the standard L-BFGS two-loop recursion~\citep{liuLimitedMemoryBFGS1989a,nocedal1999numerical}.
For the case $\mu_k>0$, regularized limited-memory schemes may in principle be employed~\citep{kanzowRegularizationLimitedMemory2023}.
Alternatively, one may adopt the simple and robust approximation mentioned in the same reference.
Specifically, we approximate $B_k$ in \cref{eq:proposed_choice_of_Bk} by
\begin{equation}\label{eq:approx_computation_of_regularized_quasi_Newton}
    B_k\approx\mathrm{BFGS}(\{s_{k_i},\overline{y}_{k_i}+\mu_k s_{k_i}\}_{i=0}^{p-1})-\mu_k I,
\end{equation}
so that $B_k+\mu_k I\approx\mathrm{BFGS}(\{s_{k_i},\overline{y}_{k_i}+\mu_k s_{k_i}\}_{i=0}^{p-1})$. Its inverse can then be computed using the standard two-loop recursion.
Although this approximation is inexact, it has been reported to yield nearly identical practical performance while remaining comparatively simple to implement~\citep{kanzowRegularizationLimitedMemory2023}.
We adopt this approximation in our numerical experiments.

Separately, there exist function-based modified secant conditions~\citep{zhangPropertiesNumericalPerformance2001,babaie-kafakiTwoNewConjugate2010,yabeLocalSuperlinearConvergence2007,hassanModifiedSecantEquation2023} that adjust the secant equation using available function values when those quantities are reliable. Such adjustments aim to improve practical convergence behavior.
We partially adopt these function-based modifications following prior work~\citep{zhangNewQuasiNewtonEquation1999,yuanConvergenceAnalysisModified2010}. The suffix \texttt{-MS} in \cref{sec:experiments} indicates the use of this technique.
Because these modifications are well established, we omit low-level implementation details here.
Further specifics are available in the cited references and in our open-source implementation.

\subsection{Step Size \texorpdfstring{$\alpha_k$}{alpha k}}\label{sec:proposed_choice_alpha}

We next discuss the adjustment of the step size $\alpha_k$ in \refLine{line:ls_backtracking} of \cref{alg:backtracking_linesearch}.
As indicated in the pseudo-code, we choose $\alpha_k$ using interpolation of the objective function.
Mor\'e--Thuente line searches~\citep{10.1145/192115.192132} are widely used in quasi-Newton methods~\citep{2020SciPy-NMeth}, and they typically employ cubic or quadratic interpolation to select trial step sizes.
In our implementation, we use an essentially identical procedure, except that the trial step size is obtained by clipping it to the interval $[\beta_{\min}\alpha_k, \beta_{\max}\alpha_k]$.
For all numerical experiments, we set the Armijo parameter to $c = 10^{-4}$ and choose the interpolation bounds as $\beta_{\min} = 1/16$ and $\beta_{\max} = 15/16$.

Furthermore, when $\mu_k>0$ and $\alpha_k=1$, we introduce a one-time adjustment of the trial step size using gradient information.
Let $d_k$ be the search direction and $g_k$ and $g_{\mathrm{try}}$ the gradients at the current and trial points.
If the directional derivative along $d_k$ changes sign, $d_k^\top g_k<0$ and $d_k^\top g_{\mathrm{try}}>0$, and the trial gradient is sufficiently aligned with $d_k$, namely $d_k^\top g_{\mathrm{try}}>0.5\norm{d_k}\norm{g_{\mathrm{try}}}$, we rescale $\alpha_k$ once by a secant-like factor,
\begin{equation*}
    \alpha_k \gets \mathrm{clip}\qty(\alpha_k \frac{-d_k^\top g_k}{d_k^\top g_{\mathrm{try}}-d_k^\top g_k},\beta_{\min}\alpha_k,\beta_{\max}\alpha_k),
\end{equation*}
where $\mathrm{clip}(x,a,b) \defeq \min(\max(x,a),b)$ is a clipping function.
When objective values are unreliable and backtracking and interpolation do not occur, this correction effectively refines the step size.
Since this adjustment is at most once per iteration, the theoretical results in \cref{sec:analysis} remain valid.

\subsection{Regularization Parameter \texorpdfstring{$\mu_k$}{mu k}}\label{sec:proposed_choice_mu}

We finally discuss the adjustment of the regularization parameter $\mu_k$ in \refLine{line:update_theta} of \cref{alg:regularized_qn}.
In our implementation, we choose the regularization parameter as
\begin{equation*}
    \mu_k = \mathrm{clip}\qty(\frac{1}{10}\norm{g_k}, \frac{1}{100} G_k, G_k) \quad \text{where} \quad G_k \defeq \sqrt{\varsigma + \sum_{j \in K, \, j \leq k} \norm{g_{j}}^2},
\end{equation*}
which is consistent with the general form stated in \cref{eq:offo_based_update} with an adaptive factor $\theta_k \in [10^{-2},1]$.
In practice, we may set $\varsigma=0$ as long as the initial gradient is nonzero since $\varsigma$ is mainly introduced for avoiding zero division, but to make it consistent with the theory, we set $\varsigma=10^{-10}$ in all experiments.

When $B_k$ is a good approximation of the Hessian, the regularization parameter $\mu_k$ should be small for fast convergence.
Since the quantity $G_k$ is strictly increasing with $k$, it is natural in practice to decrease the effective regularization when sufficient progress in the objective function is achieved.
When the difference between the left- and right-hand sides of \cref{eq:mu_0_condition} is larger than $1$, we restart the accumulation by resetting $K^+$ to the empty set and $G_k$ to $\varsigma$.
As long as the number of such restarts is finite, the overall convergence analysis in \cref{sec:analysis} remains valid.
Since these choices are heuristic, there may be room for further improvement. Especially, further increasing $\mu_k$ stabilizes the method when the objective function values are highly noisy, and decreasing $\mu_k$ more aggressively may accelerate convergence when the objective function is accurate.

\section{Numerical Experiments}\label{sec:experiments}

We conducted numerical experiments to evaluate the practical performance of our proposed method and to compare it with existing optimization algorithms.

\subsection{Methods}\label{sssec:methods}

We compared the following methods in our experiments.

\begin{itemize}
    \item (\texttt{Ours}): Our proposed regularized quasi-Newton method
          \begin{itemize}
              \item We used \cref{alg:regularized_qn,alg:backtracking_linesearch} with the details in \cref{sec:proposed_choice}.
              \item (\texttt{Ours-MS}) means the variant incorporating the modified secant condition described in \cref{sec:proposed_choice_Bk}.
          \end{itemize}

    \item (\texttt{Line}): Line search L-BFGS method
          \begin{itemize}
              \item Our Python implementation ported from LBFGSpp~\citep{Okazaki2007libLBFGS}.
              \item The step size was determined using the Mor\'e--Thuente line search, closely resembling SciPy's implementation.
              \item (\texttt{Line-MS}) means the same variant as \texttt{Ours-MS}.
          \end{itemize}

    \item (\texttt{Reg}): Regularized L-BFGS method~\citep{kanzowRegularizationLimitedMemory2023}
          \begin{itemize}
              \item A quadratic regularization-based quasi-Newton method that does not rely on line search and is conceptually closer to trust-region methods.
              \item We wrapped the function \texttt{regLBFGS} with \texttt{solveNonmonotone} from~\citep{kanzowRegularizationLimitedMemory2023}.
              \item (\texttt{Reg-Sec}) means the incorporation of the approximated computation as in \cref{eq:approx_computation_of_regularized_quasi_Newton}. This is a wrapped function of \texttt{regLBFGSsec} from~\citep{kanzowRegularizationLimitedMemory2023}.
          \end{itemize}

    \item (\texttt{SciPy}): SciPy's L-BFGS-B method~\citep{2020SciPy-NMeth}
          \begin{itemize}
              \item An established L-BFGS method implemented in C and called from Python.
              \item We used the default parameters, with the exception that \texttt{ftol} was set to $0$ to avoid premature termination.
          \end{itemize}

    \item (\texttt{NTQN}): Noise-tolerant quasi-Newton method~\citep{xie2020analysis,shiNoiseTolerantQuasiNewtonAlgorithm2022}
          \begin{itemize}
              \item A method designed to accommodate inexact gradient information, as mentioned in \cref{sec:problem_setting}.
              \item We mainly used the default parameter settings, and slightly modified the stopping criteria to terminate at the desired gradient norm tolerance.
          \end{itemize}
\end{itemize}

In the following, each method is referred to by the name given in parentheses.
All these methods are L-BFGS-based, and we set the number of stored curvature pairs to $10$, the default value in SciPy's L-BFGS-B implementation.
All experiments were conducted on a Microsoft Windows 11 Home system (version~10.0, build~26100) equipped with a 13th Gen Intel\textsuperscript{\textregistered} Core\textsuperscript{\texttrademark} i7-1360P CPU, featuring 12 physical cores and 16 logical processors. All computations were performed using the CPU only.

\subsection{Experimental Results}\label{sec:results}

\subsubsection{Test Problems and Settings}\label{sssec:problems}

We evaluated all algorithms on unconstrained problems from the CUTEst benchmark collection~\citep{gouldCUTEstConstrainedUnconstrained2015}, implemented in Python using the PyCUTEst interface~\citep{PyCUTEst2022}.
All problems were initialized with the default starting points provided by the library.
Following the previous work~\citep{kanzowRegularizationLimitedMemory2023}, we excluded problems such as the initial point being already stationary or containing \texttt{NaN} and \texttt{INF} in its function values or gradient.
To simulate different numerical environments, we tested the following four settings:
\begin{itemize}
    \item Artificially noised 64-bit floating-point precision (220 problems, $\feps=10^{-2}$)
    \item 64-bit floating-point precision (220 problems, $\feps=2.22 \times 10^{-9}$)
    \item 32-bit floating-point precision (220 problems, $\feps=1.19 \times 10^{-3}$)
    \item 16-bit floating-point precision (152 problems, $\feps=9.77 \times 10^{-2}$)
\end{itemize}
In the artificial noise setting, we added uniform random noise to both the objective function values and each component of the gradient vectors independently, i.e.,
\begin{equation*}
    \overline{f}(x) = f(x) + \mathrm{unif}(-10^{-3}, 10^{-3}), \quad
    \nabla \overline{f}(x) = \nabla f(x) + \mathrm{unif}(-10^{-3}, 10^{-3}) \mathbbm{1},
\end{equation*}
where $\mathrm{unif}(a,b)$ denotes a uniform random variable in the interval $[a,b]$ and $\mathbbm{1}$ is the vector of all ones.
This setting is the same as the one in~\citep{shiNoiseTolerantQuasiNewtonAlgorithm2022}.
In the reduced precision settings, to simulate lower numerical precision, we first cast the input vector $x$ to the target lower precision, then computed $\overline{f}(x)$ and $\nabla \overline{f}(x)$ using the lower-precision input.
Although the internal computations of CUTEst remain in 64-bit precision, the casting of the input vector effectively simulated the impact of reduced precision on function and gradient evaluations.

Each setting uses a different error rate $\feps$ in the error model (\cref{eq:error_model}), as summarized in \cref{tab:machine_epsilon}.
The table relates the machine epsilon (maximum relative rounding error) for each floating-point precision to the $\feps$ value used in our experiments. The relatively large $\feps$ values are chosen to account for the cumulative errors that may arise during optimization.
We also set the maximum number of iterations to $k_{\max} = 15{,}000$ and timed out runs exceeding 10 minutes per problem.

\begin{table}[t]
    \centering
    \caption{
        The ``min abs value'' is the smallest positive normalized number,
        the ``relative error'' is the maximum relative rounding error,
        and $\feps$ is the parameter in \eqref{eq:error_model} used in our experiments.
    }
    \begin{tabular}{l|ccc}
        \toprule
        explanation             & min abs value           & relative error                         & $\feps$                  \\
        \midrule
        64-bit double precision & $2.23 \times 10^{-308}$ & $2^{-52} \approx 2.22 \times 10^{-16}$ & $2.22 \times 10^{-9}$ \\
        32-bit single precision & $1.18 \times 10^{-38}$ & $2^{-23} \approx 1.19 \times 10^{-7}$ & $1.19 \times 10^{-3}$ \\
        16-bit half precision   & $6.10 \times 10^{-5}$ & $2^{-10} \approx 9.77 \times 10^{-4}$ & $9.77 \times 10^{-2}$ \\
        \bottomrule
    \end{tabular}
    \label{tab:machine_epsilon}
\end{table}

\subsubsection{Performance Profiles}\label{sssec:metrics}

We used performance profiles~\citep{dolanBenchmarkingOptimizationSoftware2002} to compare the algorithms.
Let $P$ denote the set of test problems and $S$ the set of solvers.
For each problem $p \in P$ and solver $s \in S$, let $n_{p,s} > 0$ denote the number of oracle calls (i.e., function and gradient evaluations) required to reach a specified gradient norm tolerance $\gtol$.
In this experiment, we use the number of oracle calls rather than clock time to avoid the influence of algorithmic overhead and to focus on the intrinsic optimization efficiency.
When a solver fails to converge within the gradient tolerance or exceeds a maximum number of oracle calls, we set $n_{p,s} = +\infty$.
Then, the performance ratio $r_{p,s}$ is defined as
\begin{equation*}
    r_{p,s} = \frac{n_{p,s}}{\min_{s' \in S} n_{p,s'}}.
\end{equation*}
For a given threshold $\tau \geq 1$ and solver $s$, the performance profile plots the proportion of problems for which $r_{p,s} \leq \tau$. This means that the $y$-coordinate for solver $s$ at $x$-coordinate $\tau$ is given by
\begin{equation*}
    \rho_s(\tau) = \frac{1}{\abs{P}} \abs{\left\{ p \in P \relmiddle| r_{p,s} \leq \tau \right\}}.
\end{equation*}
A curve that lies above others indicates better performance.
Following standard implementations~\citep{2020SciPy-NMeth}, we used the infinity norm for the gradient, though other norms may equally be employed.

\subsubsection{Results with Artificial Noise}\label{sssec:noise_results}

We first present the result in the artificial noise setting described in \cref{sssec:problems}.
\Cref{fig:results_noise} presents the performance profile.
In this regime, the proposed methods show superior robustness compared to other quasi-Newton methods.
This result highlights the validity of our theoretical analysis in \cref{sec:analysis} and demonstrates the effectiveness of our approach in handling noisy objective function values.

\begin{figure}[t]
    \centering
    \begin{subfigure}[b]{\resultsNoiseMainWidth\textwidth}
        \centering
        \includegraphics[width=\textwidth]{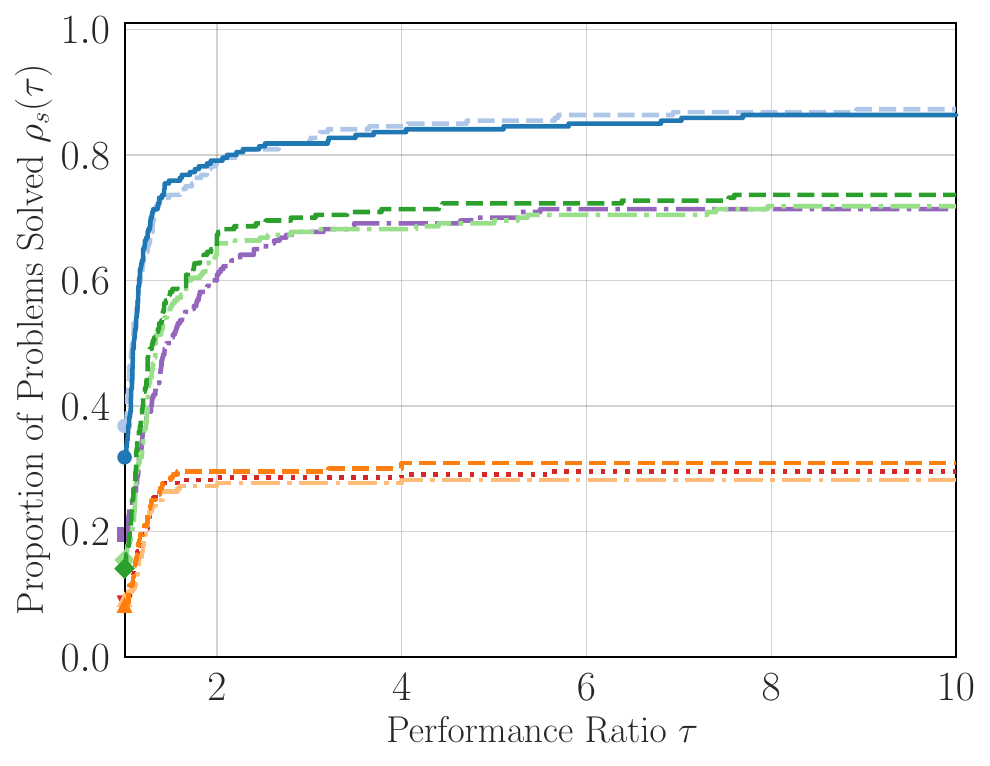}
    \end{subfigure}
    \begin{subfigure}[b]{\resultsNoiseLegendWidth\textwidth}
        \centering
        \includegraphics[width=\textwidth]{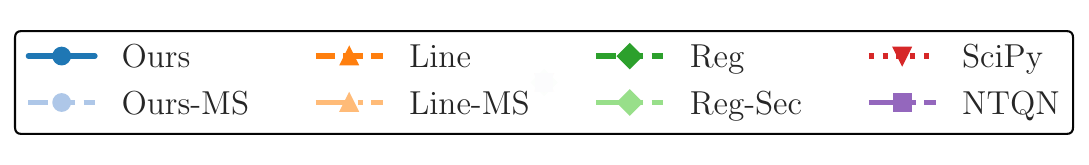}
    \end{subfigure}
    \caption{Performance profile for artificial noise setting (level of $10^{-3}$) and $\gtol=10^{-2}$.}
    \label{fig:results_noise}
\end{figure}

\subsubsection{Results at Different Precision Levels}\label{sssec:exact_results}

We next present performance profiles for each floating-point precision (64-, 32-, and 16-bit).
We evaluated each method at three gradient norm tolerance levels, i.e., $\gtol \in \{10^{-1}, 10^{-3}, 10^{-5}\}$.
These tolerance levels represent different optimization regimes, from moderate accuracy to high precision requirements.
\Cref{fig:results_all_precisions} shows the comprehensive performance profiles across all three precision levels.
Our methods (\texttt{Ours}) and (\texttt{Ours-MS}) demonstrate competitive or superior performance compared to existing methods in the standard 64-bit setting and maintain robust performance as precision decreases to 32-bit and 16-bit.
The proposed methods maintain stability even when traditional line search methods encounter difficulties with reduced precision arithmetic.

\begin{figure}[t]
    \centering
    \begin{subfigure}[b]{\resultsAllPrecisionsSubfigWidth\textwidth}
        \centering
        \includegraphics[width=\textwidth]{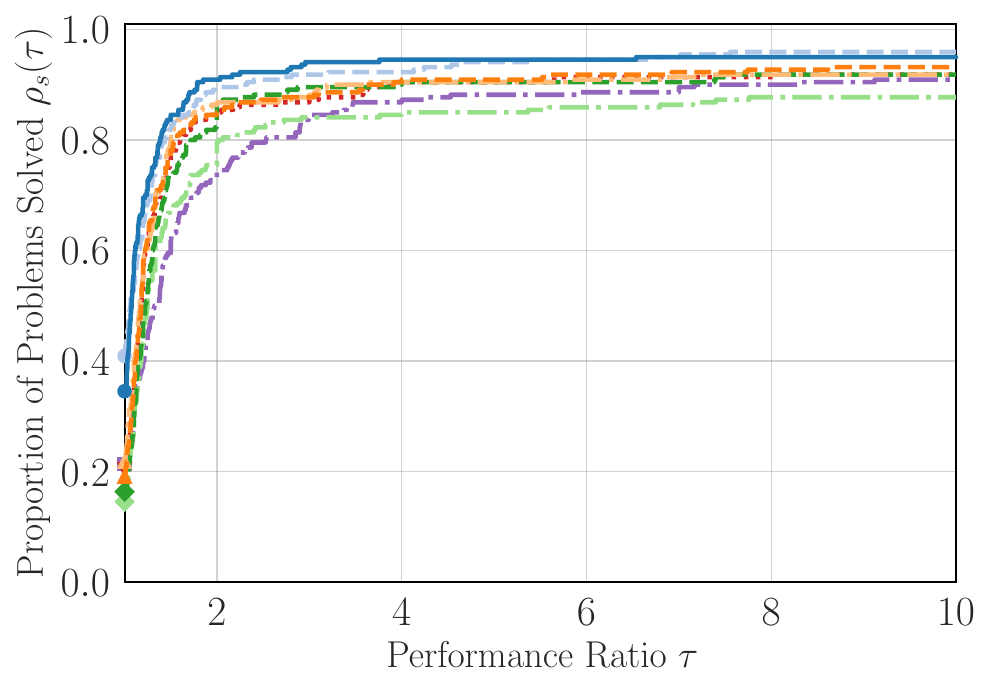}
        \caption{64-bit, $\gtol=10^{-1}$}
    \end{subfigure}
    \hfill
    \begin{subfigure}[b]{\resultsAllPrecisionsSubfigWidth\textwidth}
        \centering
        \includegraphics[width=\textwidth]{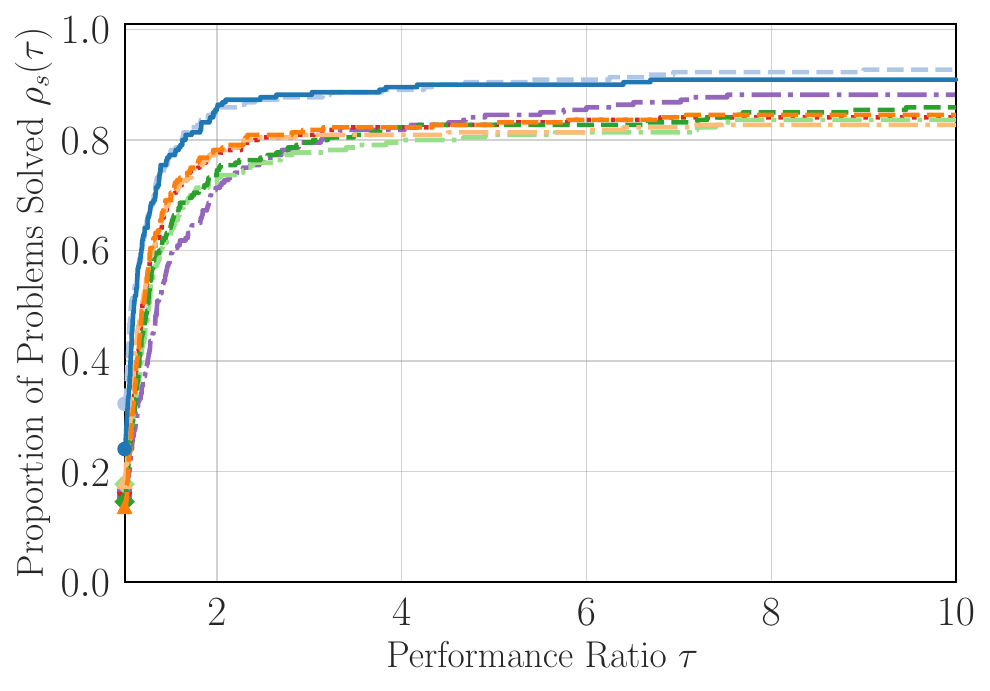}
        \caption{64-bit, $\gtol=10^{-3}$}
    \end{subfigure}
    \hfill
    \begin{subfigure}[b]{\resultsAllPrecisionsSubfigWidth\textwidth}
        \centering
        \includegraphics[width=\textwidth]{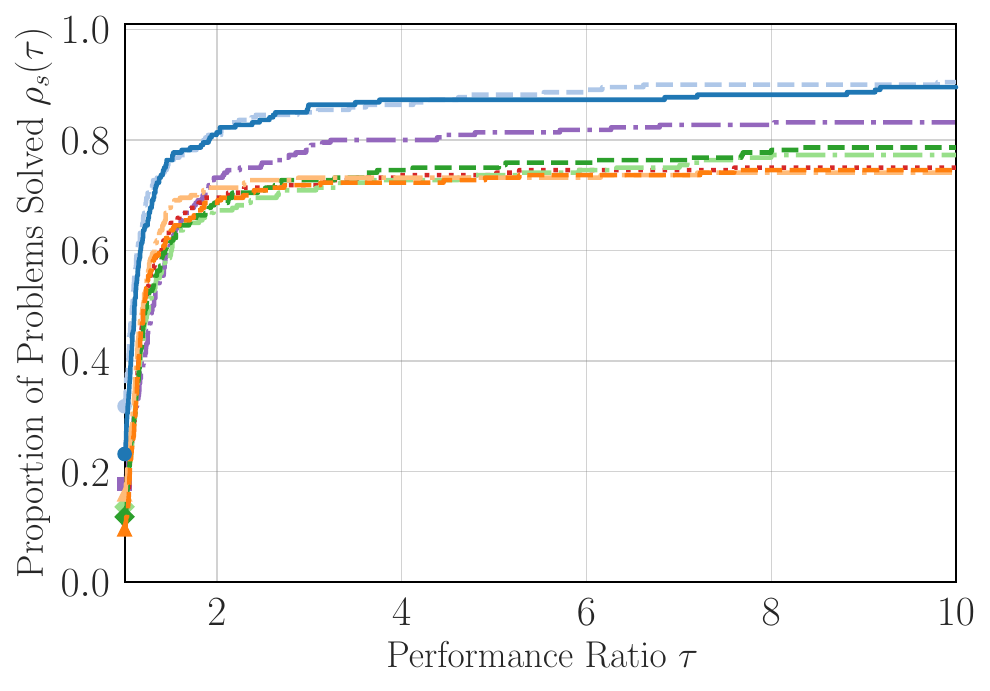}
        \caption{64-bit, $\gtol=10^{-5}$}
    \end{subfigure}

    \vspace{0.5em}

    \begin{subfigure}[b]{\resultsAllPrecisionsSubfigWidth\textwidth}
        \centering
        \includegraphics[width=\textwidth]{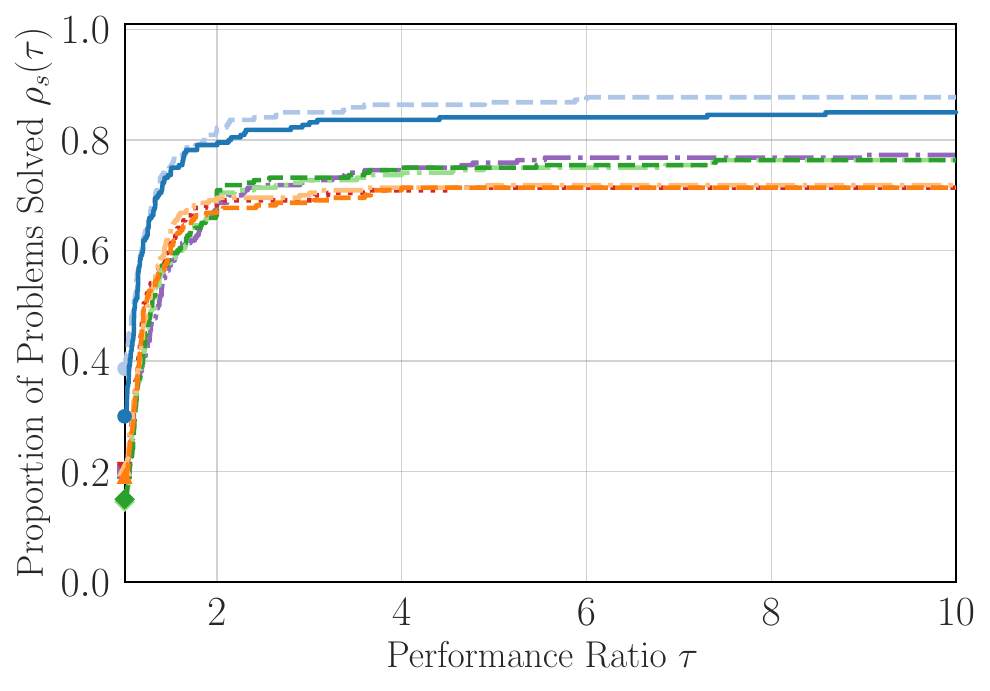}
        \caption{32-bit, $\gtol=10^{-1}$}
    \end{subfigure}
    \hfill
    \begin{subfigure}[b]{\resultsAllPrecisionsSubfigWidth\textwidth}
        \centering
        \includegraphics[width=\textwidth]{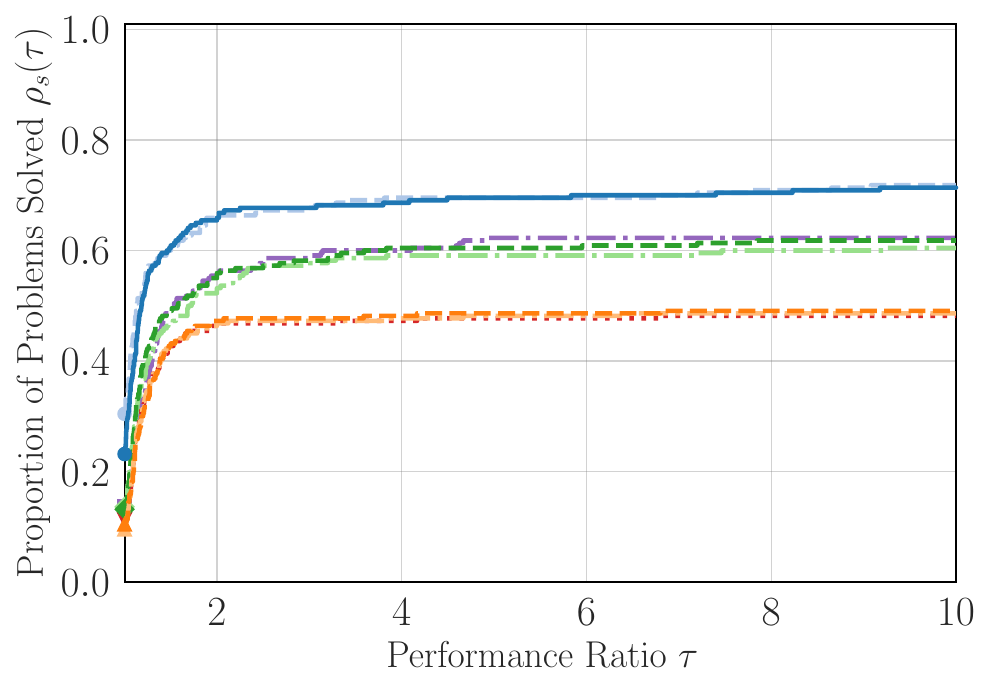}
        \caption{32-bit, $\gtol=10^{-3}$}
    \end{subfigure}
    \hfill
    \begin{subfigure}[b]{\resultsAllPrecisionsSubfigWidth\textwidth}
        \centering
        \includegraphics[width=\textwidth]{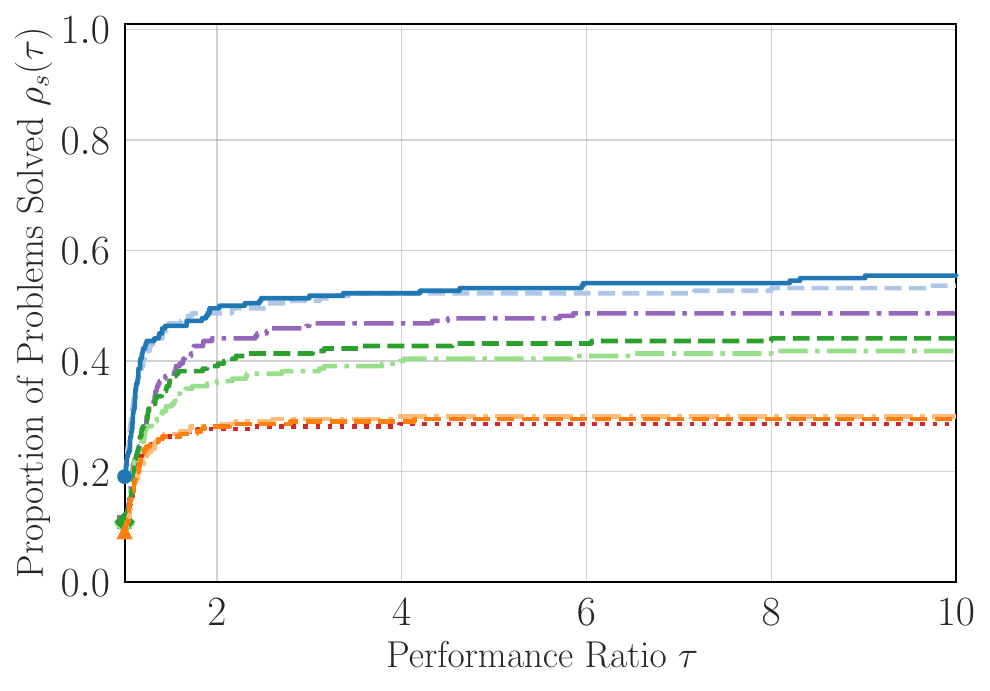}
        \caption{32-bit, $\gtol=10^{-5}$}
    \end{subfigure}

    \vspace{0.5em}

    \begin{subfigure}[b]{\resultsAllPrecisionsSubfigWidth\textwidth}
        \centering
        \includegraphics[width=\textwidth]{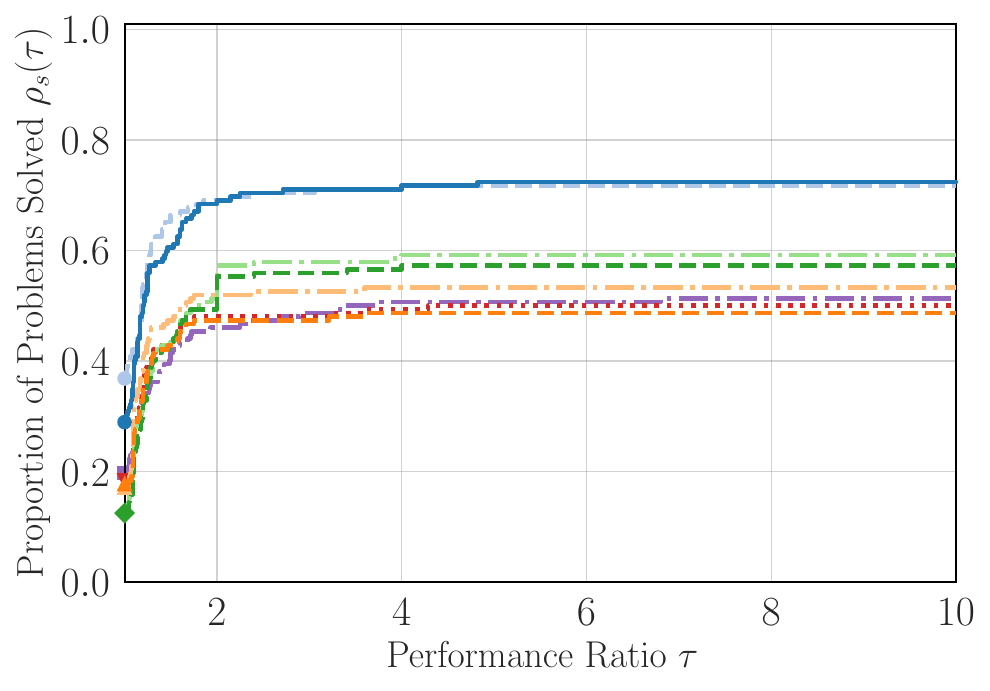}
        \caption{16-bit, $\gtol=10^{-1}$}
    \end{subfigure}
    \hfill
    \begin{subfigure}[b]{\resultsAllPrecisionsSubfigWidth\textwidth}
        \centering
        \includegraphics[width=\textwidth]{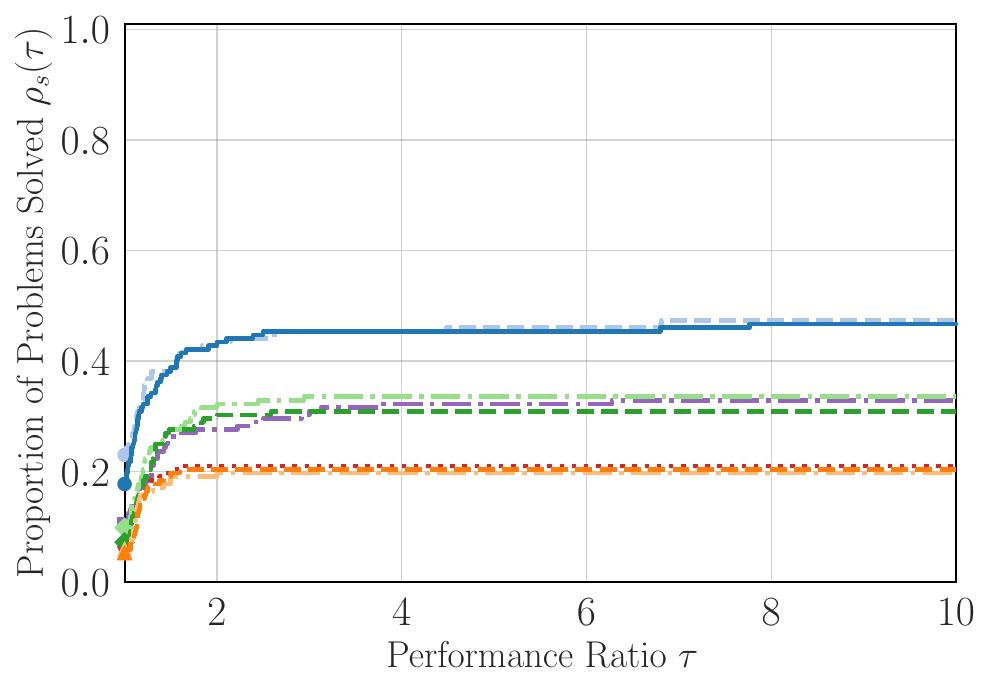}
        \caption{16-bit, $\gtol=10^{-3}$}
    \end{subfigure}
    \hfill
    \begin{subfigure}[b]{\resultsAllPrecisionsSubfigWidth\textwidth}
        \centering
        \includegraphics[width=\textwidth]{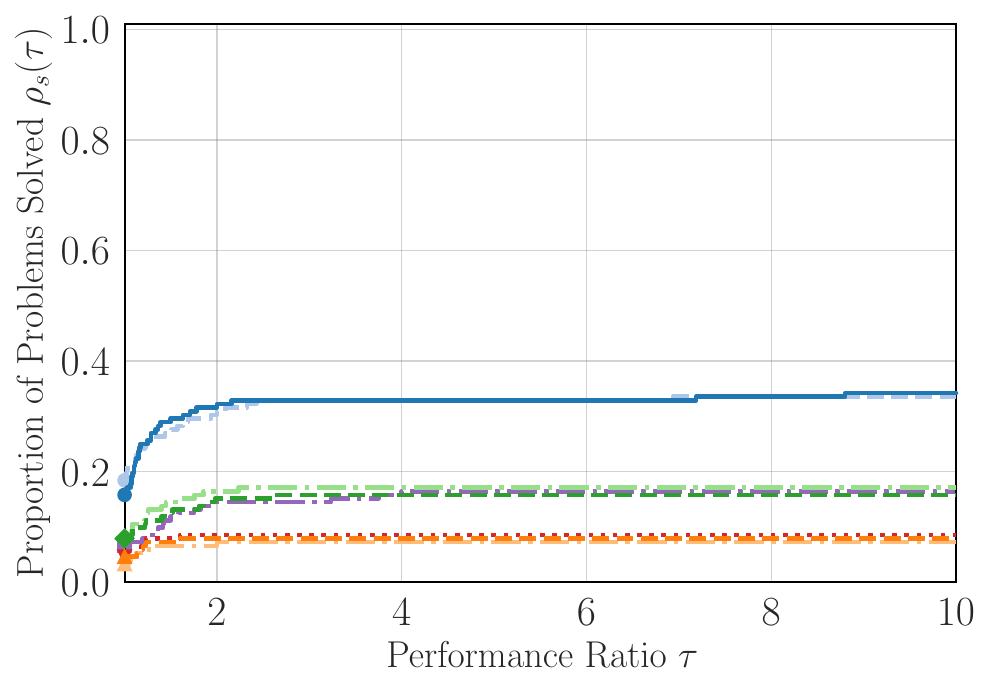}
        \caption{16-bit, $\gtol=10^{-5}$}
    \end{subfigure}

    \vspace{0.5em}

    \begin{subfigure}[b]{0.8\textwidth}
        \centering
        \includegraphics[width=\textwidth]{./imgs/compare/_legend.pdf}
    \end{subfigure}
    \caption{Performance profiles across different floating-point precisions (64-, 32-, and 16-bit) and tolerance levels. The results demonstrate the robustness of the proposed methods.}
    \label{fig:results_all_precisions}
\end{figure}

\subsection{Computational Time per Iteration}\label{sec:comp_time}

\begin{figure}[t]
    \centering
    \includegraphics[width=0.9\textwidth]{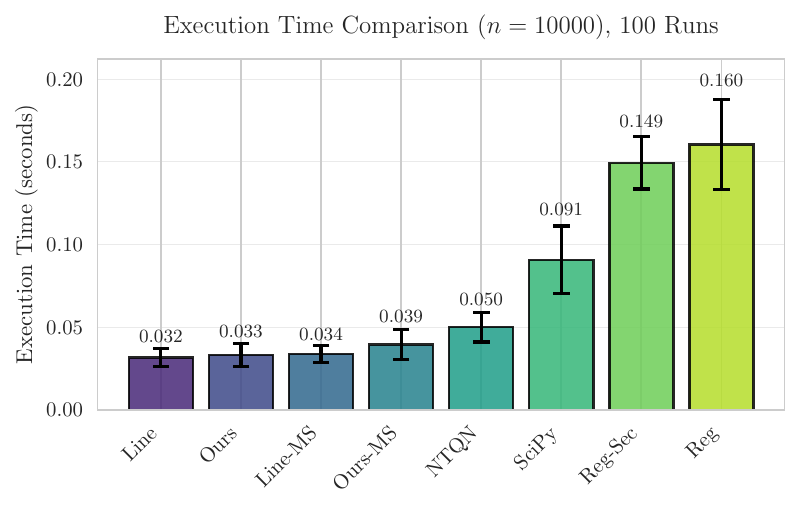}
    \caption{Mean execution times for the experiment in \cref{sec:comp_time}.
        This plot roughly indicates the extra computational overhead of each method.
        Error bars represent one standard deviation over $100$ runs.}
    \label{fig:exec_time}
\end{figure}

In this subsection, we present an experiment to compare the per-iteration computational cost of each method. Here, our purpose lies in the measurement of the total execution time rather than the number of oracle calls.
We conducted experiments on the ill-conditioned quadratic problem
\begin{equation*}
    \underset{x \in \bbR^n}{\mathrm{minimize}} \quad \frac{1}{2} \sum_{i=1}^n i x_i^2
\end{equation*}
with $n=10{,}000$.
The initial point $x_0$ was chosen as the all-ones vector.
We used memory size $m=10$ and maximum iterations $k_{\max}=100$, and we did not set any stopping criteria other than reaching $k_{\max}$.
The simplicity of the optimization problem and the small value of $k_{\max}$ are well suited to the purpose of this experiment, as they ensure that the total runtime is dominated by algorithmic overhead rather than function or gradient computation.
Timing data were collected after three warm-up runs, followed by $100$ recorded trials.

In \cref{fig:exec_time}, we report the execution time statistics, and we can observe that our proposed methods (\texttt{Ours}) and (\texttt{Ours-MS}) exhibit competitive performance compared to other methods.
Combining with the results from \cref{sec:results}, our methods are competitive or superior in total oracle calls and per-iteration computational cost, indicating practical efficiency.
As a side note, although the (\texttt{SciPy}) calls optimized C routines, it incurs associated dispatch overhead, producing a larger mean time. One of the reasons why the (\texttt{Reg}) and (\texttt{Reg-Sec}) methods take longer execution time is due to the implementation aspect of the original code.

In \cref{tab:time_results}, we report the total number of oracle calls (calls of $\overline{f}$ and $\nabla \overline{f}$) and final objective values $\overline{f}(x_{k_{\max}})$.
The purpose of this table is to confirm that all methods achieved similar convergence behavior within the limited number of iterations. The comparable values in the table indicate that the comparison of execution times is fair and not affected by differences in convergence behavior.

\begin{table}[t]
    \centering
    \caption{The number of oracle calls and final observed objective values for the experiment in \cref{sec:comp_time}.}
    \label{tab:time_results}
    \begin{tabular}{lrrrrrrrr}
    \toprule
                                & \texttt{Line} & \texttt{Ours} & \texttt{Line-MS} & \texttt{Ours-MS} & \texttt{NTQN} & \texttt{SciPy} & \texttt{Reg-Sec} & \texttt{Reg} \\
    \midrule
            \# Calls & 212 & 202 & 212 & 202 & 219 & 212 & 206 & 206 \\
    $\overline{f}(x_{k_{\max}})$ & 1.24 & 1.34 & 1.24 & 1.34 & 1.43 & 1.24 & 1.48 & 1.47 \\
    \bottomrule
    \end{tabular}
\end{table}

\section{Conclusion}\label{sec:conclusion}

In this work, we proposed a regularized quasi-Newton method for noisy nonconvex optimization and established its global convergence properties. We also demonstrated the method's effectiveness through numerical experiments on standard benchmark problems under various noise levels and precision settings.
The results indicate that our methods are robust and effective, particularly in noisy or low-precision environments where traditional line search methods may struggle.

Future work includes the investigation of local convergence properties, the extension to constrained optimization, and the applications to practical problems in machine learning and scientific computing. Additionally, exploring adaptive strategies for selecting algorithmic parameters based on problem characteristics could further enhance the method's performance.
An explicit analysis of the effect of inexact gradient evaluations would also provide valuable insight into the method's robustness and guide further enhancements.

\section*{Acknowledgements}
This work was partially supported by JSPS KAKENHI (23H03351, 24K23853) and JST CREST (JPMJCR24Q2).

\begin{appendices}
    \crefalias{section}{appendix}
    \section{Proof of \texorpdfstring{\cref{lem:sum_norm_g_over_mu_geq_2sqrt_leq_log}}{Lemma \ref{lem:sum_norm_g_over_mu_geq_2sqrt_leq_log}}}\label{sec:proof_of_sum_norm_g_over_mu_geq_2sqrt_leq_log}

The proof of \cref{lem:sum_norm_g_over_mu_geq_2sqrt_leq_log} is similar to existing results~\citep[Lemma~3.2]{wardAdaGradStepsizesSharp2020},~\citep[Lemma~3.1]{Gratton08022024}.
\begin{proof}
    Recall that \cref{eq:offo_based_update} defines $\mu_k$ as $\theta_k \sqrt{\varsigma + \sum_{j \in K^+, \, j \leq k} \norm{g_j}^2}$, and \refLine{line:update_theta} of \cref{alg:regularized_qn} asserts $\theta_k \in [\theta_{\min}, \theta_{\max}]$.
    Let $k_i \in K^+$ be the $i$-th element of $K^+$.
    For any $0 < x < y$, $x/y \le -\ln(1-x/y) = \ln y - \ln(y-x)$ holds. Thus, we have
    \begin{align*}
        \frac{\norm{g_{k_i}}^2}{\mu_{k_i}^2}
         & \leq \frac{1}{\theta_{\min}^2} \frac{\norm{g_{k_i}}^2}{\varsigma+\sum_{\ell=0}^{i} \norm{g_{k_\ell}}^2}                                                    &  & (\text{definition})        \\
         & \le \frac{1}{\theta_{\min}^2} \qty(\ln\qty(\varsigma+\sum_{\ell=0}^{i} \norm{g_{k_\ell}}^2) - \ln\qty(\varsigma+\sum_{\ell=0}^{i-1} \norm{g_{k_\ell}}^2)). &  & (x/y \le \ln y - \ln(y-x))
    \end{align*}
    Similarly, $x/\sqrt{y} \ge x/(\sqrt{y}+\sqrt{y-x}) = \sqrt{y}-\sqrt{y-x}$ holds. Thus, we have
    \begin{align*}
        \frac{\norm{g_{k_i}}^2}{\mu_{k_i}}
         & \ge \frac{1}{\theta_{\max}} \frac{\norm{g_{k_i}}^2}{\sqrt{\varsigma + \sum_{\ell=0}^{i} \norm{g_{k_\ell}^2}}}                                                      &  & (\text{definition})                  \\
         & \ge \frac{1}{\theta_{\max}} \qty(\sqrt{\varsigma+\sum_{\ell=0}^{i\vphantom{i-1}} \norm{g_{k_\ell}}^2} - \sqrt{\varsigma+\sum_{\ell=0}^{i-1} \norm{g_{k_\ell}}^2}). &  & (x/\sqrt{y} \ge \sqrt{y}-\sqrt{y-x})
    \end{align*}
    Summing these inequalities over $0 \le i < \abs{K^+}$ yields the claim.
    \myQED
\end{proof}

\section{Proof of \texorpdfstring{\cref{prop:bounded_BFGS}}{Proposition \ref{prop:bounded_BFGS}}}\label{sec:proof_of_bounded_BFGS}

The proof of \cref{prop:bounded_BFGS} is similar to existing results~\citep[Theorem 5.5]{gaoQuasiNewtonMethodsSuperlinear2018}~\citep[Lemma 1]{gowerStochasticBlockBFGS2016}.
\begin{proof}
    We consider standard BFGS updates with a single curvature pair $(s,y)$.
    From \cref{eq:curvature_conditions1,eq:curvature_conditions2}, we have
    \begin{equation}\label{eq:curvature_norm_bounds}
        \norm{\frac{y y^\top}{y^\top s}} = \frac{y^\top y}{y^\top s} \le \Lambda, \quad
        \norm{\frac{y s^\top}{y^\top s}} \le \frac{\norm{y}\norm{s}}{\sqrt{\frac{1}{\Lambda}\norm{y}^2}\sqrt{\lambda\norm{s}^2}} = \sqrt{\kappa}, \quad
        \norm{\frac{s s^\top}{y^\top s}} = \frac{s^\top s}{y^\top s} \le \frac{1}{\lambda}.
    \end{equation}
    We next bound the scalar $\gamma$ used in the initial matrix $\gamma I$.
    From \cref{eq:curvature_conditions1}, we have $\gamma \le \Lambda$.
    By Cauchy--Schwarz inequality and \cref{eq:curvature_conditions2}, we have $\gamma \ge \norm{y_{k_0}}^2 / \norm{y_{k_0}}\norm{s_{k_0}} = \norm{y_{k_0}} / \norm{s_{k_0}} \ge \lambda$. Thus, $\gamma$ is bounded by $\lambda \le \gamma \le \Lambda$.

    Let $B, H$ denote the positive definite Hessian approximation and its inverse before the update, and $B_+, H_+$ the updated matrices.
    The BFGS update rule is
    \begin{equation*}
        B_+   =B - \frac{Bss^\top B}{s^\top Bs} + \frac{yy^\top}{y^\top s}, \quad
        H_+   =\left(I-\frac{s y^\top}{y^\top s}\right)H\left(I-\frac{y s^\top}{y^\top s}\right)+\frac{s s^\top}{y^\top s}.
    \end{equation*}
    Since $B$ is symmetric positive definite matrix and $s \neq 0$, we have
    \begin{align}
        \norm{B_+} & \le\norm{B}+\norm{\frac{yy^\top}{y^\top s}} \le \norm{B}+\Lambda,\label{eq:BFGS_B_norm_bound}                                                                                                                                                     \\
        \norm{H_+} & =\norm{\qty(I-\frac{y s^\top}{y^\top s})^\top H\qty(I-\frac{y s^\top}{y^\top s})+\frac{s s^\top}{y^\top s}}                                                                                \notag                                                 \\
                   & \le\norm{I-\frac{y s^\top}{y^\top s}}^2\norm{H}+\norm{\frac{s s^\top}{y^\top s}}                                                                                                       \notag                                                     \\
                   & \le\qty(1+\norm{\frac{y s^\top}{y^\top s}})^2\norm{H}+\norm{\frac{s s^\top}{y^\top s}}                                                                                                       \notag                                               \\
                   & \le \qty(1+\sqrt{\kappa})^2\norm{H}+\frac{1}{\lambda}.  \label{eq:BFGS_H_norm_bound}                                                                                                                &  & (\text{\cref{eq:curvature_norm_bounds}})
    \end{align}
    Recall that $\mathrm{BFGS}(\{(s_{k_i},y_{k_i})\}_{i=0}^{p-1})$ denotes the matrix obtained by sequentially applying the BFGS update to the initial matrix $\gamma I$ using the curvature pairs $\{(s_{k_i},y_{k_i})\}_{i=0}^{p-1}$.
    Applying \cref{eq:BFGS_B_norm_bound,eq:BFGS_H_norm_bound} recursively over $p$ updates and using the initial matrix $\gamma I$, we obtain
    \begin{align*}
        \norm{\mathrm{BFGS}(\{s_{k_i},y_{k_i}\}_{i=0}^{p-1})}        & \le \gamma + p\Lambda \le (1+p) \Lambda,                                                                               \\
        \norm{(\mathrm{BFGS}(\{s_{k_i},y_{k_i}\}_{i=0}^{p-1}))^{-1}} & \le (1+\sqrt{\kappa})^{2p} \gamma^{-1} +\frac{1}{\lambda}\sum_{i=0}^{p-1}(1+\sqrt{\kappa})^{2i}                        \\
                                                                     & \le (1+\sqrt{\kappa})^{2p} \frac{1}{\lambda}+\frac{1}{\lambda}\frac{(1+\sqrt{\kappa})^{2p}-1}{(1+\sqrt{\kappa})^{2}-1} \\
                                                                     & \le (1+\sqrt{\kappa})^{2p}\qty(\frac{1}{\lambda}+\frac{1}{\lambda\qty(2\sqrt{\kappa}+\kappa)}).
    \end{align*}
    Therefore, $\mathrm{BFGS}(\{s_{k_i},y_{k_i}\}_{i=0}^{p-1})$ satisfies the stated bounds in \cref{eq:bounded_BFGS_mM}.
    \myQED
\end{proof}

The conditions \cref{eq:curvature_conditions1,eq:curvature_conditions2} are standard in the analysis of quasi-Newton methods~\citep[Theorem 2.1]{byrdToolAnalysisQuasiNewton1989}.
If $f$ is $L$-smooth (\cref{ass:LSmooth}) and convex, then by the Baillon--Haddad theorem~\citep{baillonQuelquesProprietesOperateurs1977,bauschkeBaillonHaddadTheoremRevisited2009} its gradient is $1/L$-cocoercive:
\begin{equation*}
    (\nabla f(x)-\nabla f(y))^\top (x-y) \ge \frac{1}{L} \norm{\nabla f(x)-\nabla f(y)}^2
\end{equation*}
for all $x,y \in \bbR^n$.
Applying this inequality with $x=x_{k+1}$ and $y=x_k$ yields \cref{eq:curvature_conditions1} with $\Lambda=L$.

\end{appendices}


\begin{thebibliography}{10}
    \providecommand{\url}[1]{{#1}}
    \providecommand{\urlprefix}{URL }
    \expandafter\ifx\csname urlstyle\endcsname\relax
        \providecommand{\doi}[1]{DOI~\discretionary{}{}{}#1}\else
        \providecommand{\doi}{DOI~\discretionary{}{}{}\begingroup \urlstyle{rm}\Url}\fi

    \bibitem{babaie-kafakiTwoNewConjugate2010}
    {Babaie-Kafaki}, S., Ghanbari, R., {Mahdavi-Amiri}, N.: Two new conjugate gradient methods based on modified secant equations.
    \newblock Journal of Computational and Applied Mathematics \textbf{234}(5), 1374--1386 (2010).
    \newblock \doi{10.1016/j.cam.2010.01.052}

    \bibitem{baillonQuelquesProprietesOperateurs1977}
    Baillon, J.B., Haddad, G.: {Quelques propri\'et\'es des op\'erateurs angle-born\'es etn-cycliquement monotones}.
    \newblock Israel Journal of Mathematics \textbf{26}(2), 137--150 (1977).
    \newblock \doi{10.1007/BF03007664}

    \bibitem{bauschkeBaillonHaddadTheoremRevisited2009}
    Bauschke, H.H., Combettes, P.L.: The {{Baillon-Haddad Theorem Revisited}} (2009).
    \newblock \doi{10.48550/arXiv.0906.0807}

    \bibitem{bellaviaImpactNoiseEvaluation2021}
    Bellavia, S., Gurioli, G., Morini, B., Toint, P.L.: The {{Impact}} of {{Noise}} on {{Evaluation Complexity}}: {{The Deterministic Trust-Region Case}} (2021).
    \newblock \doi{10.48550/arXiv.2104.02519}

    \bibitem{berahasTheoreticalEmpiricalComparison2021}
    Berahas, A.S., Cao, L., Choromanski, K., Scheinberg, K.: A {{Theoretical}} and {{Empirical Comparison}} of {{Gradient Approximations}} in {{Derivative-Free Optimization}} (2021).
    \newblock \doi{10.48550/arXiv.1905.01332}

    \bibitem{berahasLimitedmemoryBFGSDisplacement2022}
    Berahas, A.S., Curtis, F.E., Zhou, B.: Limited-memory {{BFGS}} with displacement aggregation.
    \newblock Mathematical Programming \textbf{194}(1), 121--157 (2022).
    \newblock \doi{10.1007/s10107-021-01621-6}

    \bibitem{byrdToolAnalysisQuasiNewton1989}
    Byrd, R.H., Nocedal, J.: A {{Tool}} for the {{Analysis}} of {{Quasi-Newton Methods}} with {{Application}} to {{Unconstrained Minimization}}.
    \newblock SIAM Journal on Numerical Analysis \textbf{26}(3), 727--739 (1989).
    \newblock \doi{10.1137/0726042}

    \bibitem{caflischMonteCarloQuasiMonte1998}
    Caflisch, R.E.: Monte {{Carlo}} and quasi-{{Monte Carlo}} methods.
    \newblock Acta Numerica \textbf{7}, 1--49 (1998).
    \newblock \doi{10.1017/S0962492900002804}

    \bibitem{carterNumericalExperienceClass1993}
    Carter, R.G.: Numerical {{Experience}} with a {{Class}} of {{Algorithms}} for {{Nonlinear Optimization Using Inexact Function}} and {{Gradient Information}}.
    \newblock SIAM Journal on Scientific Computing \textbf{14}(2), 368--388 (1993).
    \newblock \doi{10.1137/0914023}

    \bibitem{clancyTROPHYTrustRegion2022}
    Clancy, R.J., Menickelly, M., H{\"u}ckelheim, J., Hovland, P., Nalluri, P., Gjini, R.: {{TROPHY}}: {{Trust Region Optimization Using}} a {{Precision Hierarchy}}.
    \newblock In: Computational {{Science}} -- {{ICCS}} 2022, pp. 445--459. Springer, Cham (2022).
    \newblock \doi{10.1007/978-3-031-08751-6_32}

    \bibitem{corlessLambertWFunction1996}
    Corless, R.M., Gonnet, G.H., Hare, D.E.G., Jeffrey, D.J., Knuth, D.E.: On the {{LambertW}} function.
    \newblock Advances in Computational Mathematics \textbf{5}(1), 329--359 (1996).
    \newblock \doi{10.1007/BF02124750}

    \bibitem{doikovSuperUniversalRegularizedNewton2024}
    Doikov, N., Mishchenko, K., Nesterov, Y.: Super-{{Universal Regularized Newton Method}}.
    \newblock SIAM Journal on Optimization \textbf{34}(1), 27--56 (2024).
    \newblock \doi{10.1137/22M1519444}

    \bibitem{dolanBenchmarkingOptimizationSoftware2002}
    Dolan, E.D., Mor{\'e}, J.J.: Benchmarking optimization software with performance profiles.
    \newblock Mathematical Programming \textbf{91}(2), 201--213 (2002).
    \newblock \doi{10.1007/s101070100263}

    \bibitem{duchiAdaptiveSubgradientMethods2011}
    Duchi, J., Hazan, E., Singer, Y.: Adaptive {{Subgradient Methods}} for {{Online Learning}} and {{Stochastic Optimization}}.
    \newblock Journal of Machine Learning Research \textbf{12}(61), 2121--2159 (2011)

    \bibitem{PyCUTEst2022}
    Fowkes, J., Roberts, L., B{\H u}rmen, {\'A}.: {{PyCUTEst}}: An open source {{Python}} package of optimization test problems.
    \newblock Journal of Open Source Software \textbf{7}(78), 4377 (2022).
    \newblock \doi{10.21105/joss.04377}

    \bibitem{gaoQuasiNewtonMethodsSuperlinear2018}
    Gao, W., Goldfarb, D.: Quasi-{{Newton Methods}}: {{Superlinear Convergence Without Line Searches}} for {{Self-Concordant Functions}} (2018).
    \newblock \doi{10.48550/arXiv.1612.06965}

    \bibitem{gouldCUTEstConstrainedUnconstrained2015}
    Gould, N.I., Orban, D., Toint, P.L.: {{CUTEst}}: A {{Constrained}} and {{Unconstrained Testing Environment}} with safe threads for mathematical optimization.
    \newblock Comput. Optim. Appl. \textbf{60}(3), 545--557 (2015).
    \newblock \doi{10.1007/s10589-014-9687-3}

    \bibitem{gowerStochasticBlockBFGS2016}
    Gower, R.M., Goldfarb, D., Richt{\'a}rik, P.: Stochastic {{Block BFGS}}: {{Squeezing More Curvature}} out of {{Data}} (2016).
    \newblock \doi{10.48550/arXiv.1603.09649}

    \bibitem{grapigliaAdaptiveTrustregionMethod2022}
    Grapiglia, G.N., Stella, G.F.D.: An adaptive trust-region method without function evaluations.
    \newblock Computational Optimization and Applications \textbf{82}(1), 31--60 (2022).
    \newblock \doi{10.1007/s10589-022-00356-0}

    \bibitem{Gratton08022024}
    Gratton, S., Jerad, S., Toint, {\relax Ph}.L.: Complexity of a class of first-order objective-function-free optimization algorithms.
    \newblock Optimization Methods and Software \textbf{0}(0), 1--31 (2024).
    \newblock \doi{10.1080/10556788.2023.2296431}

    \bibitem{grattonNoteSolvingNonlinear2020}
    Gratton, S., Toint, P.L.: A note on solving nonlinear optimization problems in variable precision.
    \newblock Computational Optimization and Applications \textbf{76}(3), 917--933 (2020).
    \newblock \doi{10.1007/s10589-020-00190-2}

    \bibitem{grattonOFFOMinimizationAlgorithms2023}
    Gratton, S., Toint, P.L.: {{OFFO}} minimization algorithms for second-order optimality and their complexity.
    \newblock Computational Optimization and Applications \textbf{84}(2), 573--607 (2023).
    \newblock \doi{10.1007/s10589-022-00435-2}

    \bibitem{hassanModifiedSecantEquation2023}
    Hassan, B.A., Moghrabi, I.A.R.: A modified secant equation quasi-{{Newton}} method for unconstrained optimization.
    \newblock Journal of Applied Mathematics and Computing \textbf{69}(1), 451--464 (2023).
    \newblock \doi{10.1007/s12190-022-01750-x}

    \bibitem{highamAccuracyStabilityNumerical2002}
    Higham, N.J.: Accuracy and {{Stability}} of {{Numerical Algorithms}}.
    \newblock {Society for Industrial and Applied Mathematics} (2002).
    \newblock \doi{10.1137/1.9780898718027}

    \bibitem{10.5555/3168451.3168462}
    Izycheva, A., Darulova, E.: On sound relative error bounds for floating-point arithmetic.
    \newblock In: Proceedings of the 17th Conference on Formal Methods in Computer-Aided Design. FMCAD Inc, Austin, Texas (2017)

    \bibitem{kanzowRegularizationLimitedMemory2023}
    Kanzow, C., Steck, D.: Regularization of limited memory quasi-{{Newton}} methods for large-scale nonconvex minimization.
    \newblock Mathematical Programming Computation \textbf{15}(3), 417--444 (2023).
    \newblock \doi{10.1007/s12532-023-00238-4}

    \bibitem{liGlobalConvergenceBFGS2001}
    Li, D.H., Fukushima, M.: On the {{Global Convergence}} of the {{BFGS Method}} for {{Nonconvex Unconstrained Optimization Problems}}.
    \newblock SIAM Journal on Optimization \textbf{11}(4), 1054--1064 (2001).
    \newblock \doi{10.1137/S1052623499354242}

    \bibitem{liTruncatedRegularizedNewton2009}
    Li, Y.J., Li, D.H.: Truncated regularized {{Newton}} method for convex minimizations.
    \newblock Computational Optimization and Applications \textbf{43}(1), 119--131 (2009).
    \newblock \doi{10.1007/s10589-007-9128-7}

    \bibitem{liuLimitedMemoryBFGS1989a}
    Liu, D.C., Nocedal, J.: On the limited memory {{BFGS}} method for large scale optimization.
    \newblock Mathematical Programming \textbf{45}(1), 503--528 (1989).
    \newblock \doi{10.1007/BF01589116}

    \bibitem{lotfiStochasticDampedLBFGS2020}
    Lotfi, S., de~Ruisselet, T.B., Orban, D., Lodi, A.: Stochastic {{Damped L-BFGS}} with {{Controlled Norm}} of the {{Hessian Approximation}} (2020).
    \newblock \doi{10.13140/RG.2.2.27851.41765/1}

    \bibitem{mannelGlobalizationLBFGSBarzilai2025}
    Mannel, F.: A {{Globalization}} of {{L-BFGS}} and the {{Barzilai}}--{{Borwein Method}} for {{Nonconvex Unconstrained Optimization}}.
    \newblock Journal of Optimization Theory and Applications \textbf{204}(3), 1--34 (2025).
    \newblock \doi{10.1007/s10957-024-02565-5}

    \bibitem{mannelStructuredLBFGSMethod2024}
    Mannel, F., Om~Aggrawal, H., Modersitzki, J.: A structured {{L-BFGS}} method and its application to inverse problems.
    \newblock Inverse Problems \textbf{40}(4), 045022 (2024).
    \newblock \doi{10.1088/1361-6420/ad2c31}

    \bibitem{10.1145/192115.192132}
    Mor{\'e}, J.J., Thuente, D.J.: Line search algorithms with guaranteed sufficient decrease.
    \newblock ACM Trans. Math. Softw. \textbf{20}(3), 286--307 (1994).
    \newblock \doi{10.1145/192115.192132}

    \bibitem{moreEstimatingComputationalNoise2011}
    Mor{\'e}, J.J., Wild, S.M.: Estimating {{Computational Noise}}.
    \newblock SIAM Journal on Scientific Computing \textbf{33}(3), 1292--1314 (2011).
    \newblock \doi{10.1137/100786125}

    \bibitem{nesterovCubicRegularizationNewton2006a}
    Nesterov, Y., Polyak, B.: Cubic regularization of {{Newton}} method and its global performance.
    \newblock Mathematical Programming \textbf{108}(1), 177--205 (2006).
    \newblock \doi{10.1007/s10107-006-0706-8}

    \bibitem{nocedal1999numerical}
    Nocedal, J., Wright, S.J.: Numerical Optimization.
    \newblock Springer (1999).
    \newblock \doi{10.1007/978-0-387-40065-5}

    \bibitem{Okazaki2007libLBFGS}
    Okazaki, N.: {{libLBFGS}}: A library of limited-memory broyden-fletcher-goldfarb-shanno (l-{{BFGS}}) (2007)

    \bibitem{paquetteStochasticLineSearch2020}
    Paquette, C., Scheinberg, K.: A {{Stochastic Line Search Method}} with {{Expected Complexity Analysis}}.
    \newblock SIAM Journal on Optimization \textbf{30}(1), 349--376 (2020).
    \newblock \doi{10.1137/18M1216250}

    \bibitem{powellAlgorithmsNonlinearConstraints1978}
    Powell, M.J.: Algorithms for nonlinear constraints that use lagrangian functions.
    \newblock Math. Program. \textbf{14}(1), 224--248 (1978).
    \newblock \doi{10.1007/BF01588967}

    \bibitem{ranganathSymmetricRankOneQuasiNewton2025}
    Ranganath, A., Singhal, M., Marcia, R.: Symmetric {{Rank-One Quasi-Newton Methods}} for {{Deep Learning Using Cubic Regularization}} (2025).
    \newblock \doi{10.48550/arXiv.2502.12298}

    \bibitem{ruanAdaptiveRegularizedQuasiNewton2024}
    Ruan, H., Yang, W.: Adaptive {{Regularized Quasi-Newton Method Using Inexact First-Order Information}}.
    \newblock Journal of Computational Mathematics \textbf{42}(6), 1656--1687 (2024).
    \newblock \doi{10.4208/jcm.2306-m2022-0279}

    \bibitem{shengNewAdaptiveTrust2018}
    Sheng, Z., Yuan, G., Cui, Z.: A new adaptive trust region algorithm for optimization problems.
    \newblock Acta Mathematica Scientia \textbf{38}(2), 479--496 (2018).
    \newblock \doi{10.1016/S0252-9602(18)30762-8}

    \bibitem{shiNoiseTolerantQuasiNewtonAlgorithm2022}
    Shi, H.J.M., Xie, Y., Byrd, R., Nocedal, J.: A {{Noise-Tolerant Quasi-Newton Algorithm}} for {{Unconstrained Optimization}}.
    \newblock SIAM Journal on Optimization \textbf{32}(1), 29--55 (2022).
    \newblock \doi{10.1137/20M1373190}

    \bibitem{sunTrustRegionMethod2023}
    Sun, S., Nocedal, J.: A trust region method for noisy unconstrained optimization.
    \newblock Mathematical Programming \textbf{202}(1), 445--472 (2023).
    \newblock \doi{10.1007/s10107-023-01941-9}

    \bibitem{uedaConvergencePropertiesRegularized2010}
    Ueda, K., Yamashita, N.: Convergence {{Properties}} of the {{Regularized Newton Method}} for~the~{{Unconstrained Nonconvex Optimization}}.
    \newblock Applied Mathematics and Optimization \textbf{62}(1), 27--46 (2010).
    \newblock \doi{10.1007/s00245-009-9094-9}

    \bibitem{uedaRegularizedNewtonMethod2014}
    Ueda, K., Yamashita, N.: A regularized {{Newton}} method without line search for unconstrained optimization.
    \newblock Computational Optimization and Applications \textbf{59}(1), 321--351 (2014).
    \newblock \doi{10.1007/s10589-014-9656-x}

    \bibitem{2020SciPy-NMeth}
    Virtanen, P., Gommers, R., Oliphant, T.E., Haberland, M., Reddy, T., Cournapeau, D., Burovski, E., Peterson, P., Weckesser, W., Bright, J., {van der Walt}, S.J., Brett, M., Wilson, J., Millman, K.J., Mayorov, N., Nelson, A.R.J., Jones, E., Kern, R., Larson, E., Carey, C.J., Polat, I., Feng, Y., Moore, E.W., VanderPlas, J., Laxalde, D., Perktold, J., Cimrman, R., Henriksen, I., Quintero, E.A., Harris, C.R., Archibald, A.M., Ribeiro, A.H., Pedregosa, F., {van Mulbregt}, P., {SciPy 1.0 Contributors}: {{SciPy}} 1.0: {{Fundamental}} algorithms for scientific computing in python.
    \newblock Nature Methods \textbf{17}, 261--272 (2020).
    \newblock \doi{10.1038/s41592-019-0686-2}

    \bibitem{wangConvergenceRatesRegularized2025}
    Wang, S., Fadili, J., Ochs, P.: Convergence rates of regularized quasi-{{Newton}} methods without strong convexity (2025).
    \newblock \doi{10.48550/arXiv.2506.00521}

    \bibitem{wardAdaGradStepsizesSharp2020}
    Ward, R., Wu, X., Bottou, L.: {{AdaGrad}} stepsizes: Sharp convergence over nonconvex landscapes.
    \newblock J. Mach. Learn. Res. \textbf{21}(1), 219:9047--219:9076 (2020)

    \bibitem{xie2020analysis}
    Xie, Y., Byrd, R.H., Nocedal, J.: Analysis of the {{BFGS}} method with errors.
    \newblock SIAM Journal on Optimization \textbf{30}(1), 182--209 (2020)

    \bibitem{yabeLocalSuperlinearConvergence2007}
    Yabe, H., Ogasawara, H., Yoshino, M.: Local and superlinear convergence of quasi-{{Newton}} methods based on modified secant conditions.
    \newblock Journal of Computational and Applied Mathematics \textbf{205}(1), 617--632 (2007).
    \newblock \doi{10.1016/j.cam.2006.05.018}

    \bibitem{yuanConvergenceAnalysisModified2010}
    Yuan, G., Wei, Z.: Convergence analysis of a modified {{BFGS}} method on~convex minimizations.
    \newblock Computational Optimization and Applications \textbf{47}(2), 237--255 (2010).
    \newblock \doi{10.1007/s10589-008-9219-0}

    \bibitem{zhangNewRegularizedQuasiNewton2018}
    Zhang, H., Ni, Q.: A new regularized quasi-{{Newton}} method for unconstrained optimization.
    \newblock Optimization Letters \textbf{12}(7), 1639--1658 (2018).
    \newblock \doi{10.1007/s11590-018-1236-z}

    \bibitem{zhangPropertiesNumericalPerformance2001}
    Zhang, J., Xu, C.: Properties and numerical performance of quasi-{{Newton}} methods with modified quasi-{{Newton}} equations.
    \newblock Journal of Computational and Applied Mathematics \textbf{137}(2), 269--278 (2001).
    \newblock \doi{10.1016/S0377-0427(00)00713-5}

    \bibitem{zhangNewQuasiNewtonEquation1999}
    Zhang, J.Z., Deng, N.Y., Chen, L.H.: New {{Quasi-Newton Equation}} and {{Related Methods}} for {{Unconstrained Optimization}}.
    \newblock Journal of Optimization Theory and Applications \textbf{102}(1), 147--167 (1999).
    \newblock \doi{10.1023/A:1021898630001}

\end{thebibliography}
\end{document}